\newcommand\cyr
\renewcommand\rmdefault{wncyr} \renewcommand\sfdefault{wncyss} \renewcommand\encodingdefault{OT2} \normalfont
\DeclareTextFontCommand{\textcyr}{\cyr} 
\newcommand*\wbar[1]{
  \hbox{ \kern-0.2em%
    \vbox{%
      \hrule height 0.5pt  
      \kern0.25ex
      \hbox{%
        \kern-0.15em
        \ensuremath{#1}%
        \kern-0.05em
      }%
    }%
  \kern0.05em}%
} 
\newcommand*\wbarnew[1]{
  \hbox{ \kern-0.2em%
    \vbox{%
      \hrule height 0.5pt  
      \kern0.25ex
      \hbox{%
        \kern-0.35em
        \ensuremath{#1}%
        \kern-0.05em
      }%
    }%
  \kern0.05em}%
}
\let\textcyr\relax}
\newcommand{\bigperp}{%
  \mathop{\mathpalette\bigp@rp\relax}%
  \displaylimits
}
\newcommand{\bigp@rp}[2]{%
  \vcenter{
    \m@th\hbox{\scalebox{\ifx#1\displaystyle2.1\else1.5\fi}{$#1\perp$}}
  }%
}
\newtheorem{theorem}{Theorem}[subsection]
\newtheorem{lemma}[theorem]{Lemma}
\newtheorem{proposition}[theorem]{Proposition}
\newtheorem{corollary}[theorem]{Corollary}
\newtheorem{conjecture}[theorem]{Conjecture}
\newtheorem{thm}{Theorem}
\theoremstyle{definition}
\newtheorem{definition}[theorem]{Definition}
\newtheorem{example}[theorem]{Example}
\newtheorem{fact}[theorem]{Fact}
\theoremstyle{remark}
\newtheorem{remark}[theorem]{Remark}
\newcommand{\lk}{\operatorname{\ell{\it k}}}
\title[Algebraic concordance order of almost classical knots]{Algebraic concordance order of almost classical knots}
\author[M. Chrisman]{Micah Chrisman}
\author[S. Mukherjee]{Sujoy Mukherjee}
\address{Department of Mathematics, The Ohio State University, Columbus, Ohio, 43210}
\email{chrisman.76@osu.edu}
\email{mukherjee.166@osu.edu}
\subjclass[2020]{Primary: 57K12, Secondary: 57N70}
\keywords{virtual knots, algebraic concordance group, virtual Seifert surface, almost classical knots}
\begin{document}

\begin{abstract} Torsion in the concordance group $\mathscr{C}$ of knots in $S^3$ can be studied with the algebraic concordance group $\mathscr{G}^{\mathbb{F}}$. Here $\mathbb{F}$ is a field of characteristic $\chi(\mathbb{F}) \ne 2$. The group $\mathscr{G}^{\mathbb{F}}$ was defined by J. Levine, who also obtained an algebraic classification when $\mathbb{F}=\mathbb{Q}$. While the concordance group $\mathscr{C}$ is abelian, it embeds into the non-abelian virtual knot concordance group $\mathscr{VC}$. It is unknown if $\mathscr{VC}$ admits non-classical finite torsion. Here we define the virtual algebraic concordance group $\mathscr{VG}^{\mathbb{F}}$ for almost classical knots . This is an analogue of $\mathscr{G}^{\mathbb{F}}$ for  homologically trivial knots in thickened surfaces $\Sigma \times [0,1]$, where $\Sigma$ is closed and oriented.  The main result is an algebraic classification of $\mathscr{VG}^{\mathbb{F}}$. A consequence of the classification is that $\mathscr{G}^{\mathbb{Q}}$ embeds into $\mathscr{VG}^{\mathbb{Q}}$ and $\mathscr{VG}^{\mathbb{Q}}$ contains many nontrivial finite-order elements that are not algebraically concordant to any classical Seifert matrix. For $\mathbb{F}=\mathbb{Z}/2\mathbb{Z}$, we give a generalization of the Arf invariant.
\end{abstract}
\maketitle
\setcounter{tocdepth}{1}

\section{Introduction} \label{sec_intro}

\subsection{Motivation} Two oriented knots $K_0,K_1$ in $S^3$ are said to be (smoothly) \emph{concordant} if there is a properly embedded annulus $A \subset S^3 \times [0,1]$ such that $A \cap S^3 \times 0=-K_0$ and $A \cap S^3 \times 1=K_1$. The concordance classes form an abelian group $\mathscr{C}$, with addition given by connected sum. The additive identity $0$ of $\mathscr{C}$ is equal to the concordance class of the unknot. A knot that is concordant to the unknot is called \emph{slice}. Every non-slice negative amphicheiral knot has order $2$ in $\mathscr{C}$. Other than $2$-torsion, it is unknown if there is any non-trivial finite torsion in $\mathscr{C}$. Order in the algebraic concordance group $\mathscr{G}^{\mathbb{Q}}$, however, is completely understood. Levine \cite{levine_2} proved that the rational algebraic concordance group $\mathscr{G}^{\mathbb{Q}}$ is isomorphic to $\mathbb{Z}^{\infty} \oplus (\mathbb{Z}/2\mathbb{Z})^{\infty} \oplus (\mathbb{Z}/4\mathbb{Z})^{\infty}$ and hence elements of $\mathscr{G}^{\mathbb{Q}}$ have order either $1,2,4,$ or $\infty$. Other obstructions used to determine the concordance order in $\mathscr{C}$ include the signature function, the Arf invariant, and Heegard-Floer homology (Jabuka-Naik \cite{jabuka_naik}). 
\newline
\newline
Another possible option for classifying torsion in $\mathscr{C}$ comes from the virtual knot concordance group. We briefly recall some essential definitions. Let $I=[0,1]$ and let $\Sigma_0,\Sigma_1$ be a closed oriented surfaces. Two oriented knots $K_0 \subset \Sigma_0 \times I$, $K_1 \subset \Sigma_1 \times I$ are said to be \emph{virtually concordant} if there is a compact oriented $3$-manifold $W$ and a properly embedded smooth annulus $A \subset W \times I$ such that $\partial W=\Sigma_1 \sqcup -\Sigma_0$ and $\partial A=K_1 \sqcup -K_0$ (Turaev \cite{turaev_cobordism}). In the smooth category, this is equivalent to virtual knot concordance (see Fenn-Rourke-Sanderson \cite{frs}, Kauffman \cite{lou_cob}, Carter-Kamada-Saito \cite{CKS}).  Concordance of long virtual knots is defined likewise (Chrisman \cite{band_pass}). The elements of the \emph{virtual knot concordance group} $\mathscr{VC}$ are the concordance classes of long virtual knots. The operation in $\mathscr{VC}$ is concatenation and the group identity is the long unknot. It follows from a result of Boden-Nagel \cite{boden_nagel} that the inclusion $\mathscr{C} \to \mathscr{VC}$ embeds $\mathscr{C}$ into the center of $\mathscr{VC}$.
\newline
\newline
To classify the torsion in $\mathscr{C}$, it therefore suffices to classify the torsion in $\mathscr{VC}$. For instance, if the only finite $m$-torsion in $\mathscr{VC}$ is non-classical for $m>2$, it would follow that $\mathscr{C}$ has only $2$-torsion. It is reasonable to suspect such theorems exist; the groups $\mathscr{C}$ and $\mathscr{VC}$ are known to have significantly different structure. For example, $\mathscr{VC}$ is not even abelian (Chrisman \cite{c_ext}). As a second example, recall that every classical knot is band-pass equivalent to either the trefoil or the unknot, according to whether its Arf invariant is $1$ or $0$, respectively. Since the Arf invariant is a classical concordance invariant, so also is the band-pass class. Concordant virtual knots, however, need not be band-pass equivalent. Moreover, every concordance class contains a representative that is that is not band-pass equivalent to either the trefoil or the unknot \cite{band_pass}. Some similarities between classical and virtual concordance do exist. As is the classical case, every virtual concordance class contains both a prime hyperbolic representative and a prime satellite representative (Chrisman \cite{chp}).
 \newline
 \newline
Yet, finite torsion in $\mathscr{VC}$ is elusive. It remains unknown if there are any finite order elements in $\mathscr{VC}$ that are not concordant to any classical knot. Note that if such a virtual knot exists, it must be in the kernel of any invariant $f:\mathscr{VC} \to A$ for $A$ a free abelian group. Some examples of virtual concordance invariants evaluated in free abelian groups are the odd writhe, the Henrich-Turaev polynomial, the affine index (or writhe) polynomial, and the zero polynomial. From the properties of the Artin representation \cite{c_ext}, it follows that the extended Milnor invariants of long virtual knots are also trivial on finite torsion. This implies that the generalized Alexander polynomial is also vanishing on the closure any long virtual knot of finite order. The kernels of all these invariants have a nontrivial intersection; all vanish on the set of \emph{almost classical knots}. Recall that a virtual knot is almost classical if it can be represented by a homologically trivial knot in some thickened surface. These knots, originally studied by Silver-Williams \cite{silwill0,silwill1,silwill}, are therefore fundamental to our understanding of torsion in $\mathscr{VC}$.
\newline
\newline
The above discussion suggests that we should restrict our attention to algebraic concordance of almost classical knots. Indeed, torsion in the classical algebraic concordance group is already classified and the most likely location of non-classical torsion in $\mathscr{VC}$ lies in set of almost classical knots. Furthermore, as each almost classical knot has a homologically trivial representative in some $\Sigma \times I$, each bounds a Seifert surface in $\Sigma \times I$. Seifert matrices can be defined in the usual way and algebraic concordance can be investigated by analogy with the classical case. The main theme of the present paper is the algebraic concordance order of such Seifert matrices. The remainder of this section outlines our results in this direction.

\subsection{Section 2: Algebraic concordance} Let $\mathbb{F}$ be a field of characteristic $\chi(\mathbb{F}) \ne 2$. After reviewing the Seifert forms of almost classical knots in Section \ref{sec_review_sf}, we define the virtual algebraic concordance group $\mathscr{VG}^{\mathbb{F}}$ in Section \ref{sec_uncoupled}. The following classification of $\mathscr{VG}^{\mathbb{F}}$ is the main result of the paper. Recall that if $\mathscr{W}(\mathbb{F})$ is the Witt ring of regular quadratic forms over $\mathbb{F}$, then its fundamental ideal $\mathscr{W}(\mathbb{F})$ consists of the even rank forms in $\mathscr{W}(\mathbb{F})$.

\begin{thm} \label{thm_classify_VG} $\mathscr{VG}^{\mathbb{F}} \cong \mathscr{I}(\mathbb{F}) \oplus \mathscr{G}^{\mathbb{F}}$, where $\mathscr{G}^{\mathbb{F}}$ is the classical algebraic concordance group over $\mathbb{F}$ and $\mathscr{I}(\mathbb{F})$ is the fundamental ideal of the Witt ring $\mathscr{W}(\mathbb{F})$.
\end{thm}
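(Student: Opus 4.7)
The plan is to construct explicit mutually inverse homomorphisms $\Phi : \mathscr{VG}^{\mathbb{F}} \to \mathscr{I}(\mathbb{F}) \oplus \mathscr{G}^{\mathbb{F}}$ and $\Psi : \mathscr{I}(\mathbb{F}) \oplus \mathscr{G}^{\mathbb{F}} \to \mathscr{VG}^{\mathbb{F}}$. The guiding picture is that an uncoupled Seifert matrix $A$ differs from a classical Seifert matrix precisely in that its skew part $A - A^{T}$ need no longer be unimodular; the degeneracy of $A - A^{T}$ is the ``purely virtual'' content, and once this is stripped off one is left with something classical. Accordingly, my first step is a normalization theorem: every uncoupled Seifert matrix is Witt-equivalent (in the uncoupled sense) to a block sum $B \oplus A_{c}$, where $B$ is symmetric (so $B - B^{T}=0$) and $A_{c}$ is a classical Seifert matrix (with $A_{c}-A_{c}^{T}$ invertible over $\mathbb{F}$). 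This is proved by setting $V_{0} = \ker(A - A^{T})$, choosing a complement $V_{1}$ on which $A - A^{T}$ is nondegenerate, and then applying a Gram--Schmidt style argument with respect to the symmetric form $A + A^{T}$ to split $A$ into the claimed block form modulo hyperbolic summands. Crucially, $\dim V_{0}$ is even, because $A-A^{T}$ has even rank in characteristic $\ne 2$ and $\dim V = 2g$ is even (the first homology of a Seifert surface of genus $g$ has rank $2g$).

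Using this normal form, I would define $\Phi$ by sending $[A] = [B \oplus A_{c}]$ to $([B], [A_{c}])$, viewing $[B]$ as an element of the Witt ring $\mathscr{W}(\mathbb{F})$; by the evenness observation above, $[B]$ lands in the fundamental ideal $\mathscr{I}(\mathbb{F})$. The inverse map $\Psi$ is given by $([b],[A_{c}]) \mapsto [B \oplus A_{c}]$ for any symmetric representative $B$ of $b$. Well-definedness of $\Psi$ on each summand is nearly immediate: the classical inclusion $\mathscr{G}^{\mathbb{F}} \hookrightarrow \mathscr{VG}^{\mathbb{F}}$ is functorial, and hyperbolic symmetric forms become virtually metabolic uncoupled matrices in the obvious way. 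The compositions $\Psi \circ \Phi$ and $\Phi \circ \Psi$ are then essentially formal, reducing to block-diagonal bookkeeping once the normalization theorem is in place.

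The main obstacle will be showing that $\Phi$ is well defined on Witt classes. If $A_{0}, A_{1}$ are uncoupled Seifert matrices and $A_{1} \oplus -A_{0}$ admits an uncoupled metabolizer, I need to extract from it a hyperbolic structure on the combined symmetric summand $B_{1} \oplus -B_{0}$ and an algebraic concordance between the classical pieces $A_{c,1}$ and $A_{c,0}$. The delicate point is that an uncoupled metabolizer $L$ does not \emph{a priori} respect the decomposition $V_{0} \oplus V_{1}$; one must show that $L$ can be deformed so that its intersection with the radical of the combined skew form is a half-dimensional subspace of that radical, and that it projects onto a classical metabolizer on the complement. This is where the theory of isometric structures enters: I would reinterpret an uncoupled Seifert matrix as a symmetric bilinear form $(V, A + A^{T})$ equipped with the (possibly non-invertible) endomorphism induced by $A - A^{T}$, and invoke the Witt-cancellation and splitting machinery for such enriched forms over fields of characteristic $\ne 2$ to carry out the splitting of the metabolizer. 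The hypothesis $\chi(\mathbb{F}) \ne 2$ is essential throughout, both to separate symmetric and skew parts of $A$ and to apply standard Witt-ring cancellation.
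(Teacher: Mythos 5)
Your guiding picture is correct, and in spirit the decomposition you aim for is exactly what the paper proves. However, the normalization step as you sketch it has a genuine gap. You set $V_{0} = \ker(A - A^{\intercal})$, but if $A$ is nonsingular and $S = A^{-1}A^{\intercal}$, then $V_{0} = \ker(S - I)$ is only the \emph{eigenspace} of $S$ at $1$, not the generalized eigenspace. Your argument then needs to split off a $B$-orthogonal complement of $V_{0}$, where $B = A + A^{\intercal}$, but $B$ restricted to $V_{0}$ may be degenerate, in which case $V_{0} \cap V_{0}^{\perp_{B}} \ne 0$ and no such splitting exists. A concrete example: take $B$ to be the $4\times4$ hyperbolic form pairing $e_{1}\leftrightarrow e_{3}$ and $e_{2}\leftrightarrow e_{4}$, and take $C = A - A^{\intercal}$ to be the skew form supported on $\mathrm{span}(e_{3},e_{4})$. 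Here $V_{0}=\mathrm{span}(e_{1},e_{2})$ is totally $B$-isotropic, $B|_{V_{0}}=0$, and a Gram--Schmidt step does not produce a direct sum decomposition of $A$; moreover $(S-I)^{2}=0$ while $S-I\neq 0$, so the full $(t-1)$-primary component is all of $V$ and is strictly larger than $V_{0}$.

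The paper's proof resolves exactly this issue by working in the directed isometric structure picture $\mathscr{VG}_{\mathbb{F}}$ and replacing your $V_{0}$ with the $(t-1)$-primary component $V_{t-1}$ of $S$. By Levine's Lemma 10, the primary components are mutually $B$-orthogonal, so $B|_{V_{t-1}}$ is automatically nondegenerate. Then Levine's Lemma 12 shows the $(t-1)$-primary part is either metabolic or concordant to one whose minimal polynomial is $t-1$, i.e., to a piece with $S=I$ (equivalently $A$ symmetric on that block). The remaining factor $\bigoplus_{\lambda\neq t\pm 1}\mathscr{VG}_{\lambda(t)}$ coincides with $\mathscr{G}_{\mathbb{F}}$ since admissibility means precisely that $t\pm 1$ do not divide $\Delta_S$. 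Your concern about $\Phi$ being well defined is also subsumed by this: a concordance preserves each primary summand, so one never needs to deform a metabolizer by hand. In short, your decomposition $B \oplus A_{c}$ and your parity observation are the right targets, but the splitting must be performed at the level of generalized eigenspaces, not kernels, and this is what the passage to isometric structures buys you. Without that passage your Gram--Schmidt step does not go through.
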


An immediate corollary is that $\mathscr{VG}^{\mathbb{Q}}$ has torsion only of order 1,2, 4, and $\infty$. Theorem \ref{thm_classify_VG} suggests a natural question: when are the Seifert matrices of an almost classical knot algebraically concordant to those of a classical knot? To study this, we introduce an extension of $\mathscr{VG}^{\mathbb{F}}$ called the \emph{coupled algebraic concordance group} $(\mathscr{VG},\mathscr{VG})^{\mathbb{F}}$. Its elements are represented by pairs $(A^+,A^-)$ of Seifert matrices, where $A^{\pm}$ correspond to the $\pm$-push-offs in $\Sigma \times I$. The pair $\textbf{A}=(A^+,A^-)$ is called a \emph{Seifert couple}. In Section \ref{sec_coupled}, we show that if $A^+$ and $A^-$ are not concordant in $\mathscr{VG}^{\mathbb{Q}}$, then $(A^+,A^-)$ is not concordant in $(\mathscr{VG},\mathscr{VG})^{\mathbb{Q}}$ to a pair of classical Seifert matrices.
\newline
\newline
For $\mathbb{F}=\mathbb{Z}/2\mathbb{Z}$, the Seifert matrices $A^{\pm}$ define a unique quadratic form $q_{\textbf{A}}$. In Section \ref{sec_arf} , we show that the Arf invariant of a regular quadratic form obstructs algebraic sliceness. In particular, if $\textbf{A}$ is algebraically slice and $q_{\textbf{A}}$ is regular, then $\text{Arf}(q_{\textbf{A}})=0$.


\subsection{Section 3: Geometric concordance} Having completed the algebraic classification, we proceed in Section \ref{sec_gc} to give a geometric interpretation of the groups $\mathscr{VG}^{\mathbb{Z}}$ and $(\mathscr{VG},\mathscr{VG})^{\mathbb{Z}}$ with coefficients in $\mathbb{Z}$. Two Seifert surfaces $F_0 \subset \Sigma_0 \times I$, $F_1 \subset \Sigma_1 \times I$ will be called \emph{virtually concordant} if the knots $K_0=\partial F_0$, $K_1 =\partial F_1$ are virtually concordant via an annulus $A \subset W \times I$ and $-F_0 \cup A \cup F_1$ bounds a compact oriented $3$-manifold in $W \times I$. This can be viewed as a generalization of Myers' notion of concordant Seifert surfaces \cite{myers_new}.  Explicit examples of virtually concordant Seifert surfaces are given in Section \ref{sec_virt_conc_defn}. The main theorem relating geometric and algebraic concordance is the following.

\begin{thm} \label{thm_B} If $F_0\subset \Sigma_0 \times I$,$F_1 \subset \Sigma_1 \times I$ are virtually concordant Seifert surfaces, then their Seifert couples $(A_0^+,A_0^-)$, $(A_1^+,A_1^-)$ are algebraically concordant in $(\mathscr{VG},\mathscr{VG})^{\mathbb{Z}}$.
\end{thm}

A fixed knot $K_0\subset \Sigma_0 \times I$ may have many Seifert surfaces in $\Sigma_0 \times I$ that are not virtually concordant to each other. However, all virtual concordance classes can be generated from any single representative $F_0 \subset \Sigma \times I$. In Section \ref{sec_ambient_Z}, we show that if a Seifert surface $F_1 \subset \Sigma_1 \times I$ of $K_1$ is virtually concordant to $F_0$ and $F_1' \subset \Sigma \times I$ is another Seifert surface for $K_1$, then there is a Seifert surface $F_0' \subset \Sigma_0 \times I$ of $K_0$ that is virtually concordant to $F_1'$ and can be obtained from $F_0$ by a sequence of isotopies, $S$-equivalences, and connected sums with a parallel copies of $\Sigma \times 1$. 

\subsection{Section 4: Calculations $\&$ Examples} The theory of Sections \ref{sec_ac} and \ref{sec_gc} is illustrated with examples in Section \ref{sec_examples}. The examples can be most easily constructed using virtual Seifert surfaces (Chrisman \cite{vss}). These are reviewed in Section \ref{sec_vss}. A sample calculation of the Arf invariant is given in Section \ref{sec_arf_example}. A guide to calculating algebraic concordance order in $\mathscr{VG}^{\mathbb{Q}}$ appears in Section \ref{sec_calculating}. In Section \ref{sec_nonclass_torsion}, we show that there is an almost classical knot having Seifert couple $\textbf{A}=(A^+,A^-)$ such that $A^+$ has order $2$ and $A^-$ has order $1$. Hence $\textbf{A}$ is not concordant in $(\mathscr{VG},\mathscr{VG})^{\mathbb{Q}}$ to any pair of classical Seifert matrices. The knot is itself not concordant to any classical knot. This gives some positive evidence to the conjecture that $\mathscr{VC}$ admits non-classical finite torsion.
\newline
\newline
Given the previous example, an obvious question is whether there exists an almost classical knot having all possible finite orders in algebraic concordance. Infinitely many examples of such knots exist, as is proved in the following theorem (see Section \ref{sec_thm_C_proof}).

\begin{thm} \label{thm_C} There exist infinitely many knots $K$ in $S^1 \times S^1 \times I$ such that for each $o \in \{1,2,4\}$, $K$ bounds a Seifert surface $F \subset S^1 \times S^1 \times I$ having algebraic concordance order $o$ in $\mathscr{VG}^{\mathbb{Q}}$.
\end{thm}

We have endeavored to provide an account that is both practical and (mostly) self-contained.  It is hoped that readers new to algebraic concordance will afterwards feel confident enough to perform their own calculations. For more detailed information on the classical algebraic concordance group, we recommend the works of Collins \cite{collins}, Levine \cite{levine_2,levine}, Livingston \cite{liv_aco}, Livingston-Naik \cite{livingston_naik}, and Milnor \cite{milnor_isom}. 

\section{Algebraic concordance} \label{sec_ac}

\subsection{Directed Seifert forms} \label{sec_review_sf} Here we review the directed Seifert forms of almost classical knots from \cite{bcg2}. Let $\Sigma$ be a closed oriented surface. We first recall the definition of the linking number in $\Sigma \times I$. For a knot $J \subset \Sigma \times I$, we have $H_1(\Sigma \times I \smallsetminus J,\Sigma \times 1;\mathbb{Z}) \cong \mathbb{Z} \cong \langle \mu \rangle$, where $\mu$ is a meridian of $J$. If $J\sqcup K \subset \Sigma \times I$ is a link, then $[K]=m \cdot [\mu] \in H_1(\Sigma \times I\smallsetminus J,\Sigma \times 1;\mathbb{Z})$ for some $m \in \mathbb{Z}$. Then define $\lk_{\Sigma}(J,K)=m$.  In general, $\lk_{\Sigma}(J,K) \ne \lk_{\Sigma}(K,J)$. By Cimasoni-Turaev \cite{ct}, the relationship is instead given by:
\begin{align} \label{eqn_ct}
\lk_{\Sigma}(J,K)-\lk_{\Sigma}(K,J) &=p_*([J])\bullet_{\Sigma}p_*([K]).
\end{align}
Throughout the paper, we will write $-\bullet_X-$ for the intersection form on $X$ (see Dold \cite{dold}, VIII.13). Given a diagram of a link $J \sqcup K$ in $\Sigma\times I$, $\lk_{\Sigma}(J,K)$ is the signed sum of over-crossings of $J$ with $K$.  
\newline
\newline
Let $K \subset \Sigma \times I$ be a homologically trivial knot. By a \emph{Seifert surface} for $K$, we mean any connected, compact, oriented surface $F \subset \Sigma \times I$ such that $\partial F=K$. For a $1$-cycle $x$ on $F$, let $x^{\pm}$ denote that $\pm$-push-off of $x$ into $\Sigma \times I \smallsetminus F$. The adjective \emph{directed} refers to a fixed choice of one of these two push-offs. The \emph{directed Seifert forms} $\theta^{\pm}_{K,F}:H_1(F;\mathbb{Z})\times H_1(F;\mathbb{Z}) \to \mathbb{Z}$ are given by:
\[
\theta^{\pm}_{K,F}(x,y)=\lk_{\Sigma}(x^{\pm},y).
\]
For a basis $\xi_F=\{\alpha_1,\ldots,\alpha_{2g}\}$ of $H_1(F;\mathbb{Z})$, the \emph{directed Seifert matrices} are $A^{\pm}=(\lk_{\Sigma}(\alpha_i^{\pm},\alpha_j))$. As in the case of classical knots in $S^3$, $A^--A^+$ is a matrix of the intersection form on $F$ (see e.g Burde-Zieschang \cite{bz}, Proposition 8.7). Contrary to the case of classical knots, it is not in general true that $A^+=(A^-)^{\intercal}$. Hence there are two different, but related, Seifert forms for any given Seifert surface. The most important properties of directed Seifert matrices are recorded below.

\begin{lemma} \label{lemma_seif_mat_facts} Let $A^{\pm}$ be directed Seifert matrices of a Seifert surface $F \subset \Sigma \times I$. Then:
\begin{enumerate}
\item $A^--A^+$ is skew-symmetric,
\item $\det(A^--A^+)=1$,
\item $A^++(A^+)^{\intercal}=A^-+(A^-)^{\intercal}$.
\end{enumerate}
\end{lemma}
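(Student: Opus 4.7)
My plan is to take as granted the claim stated in the excerpt — that $A^--A^+$ is a Gram matrix of the intersection form on $F$ in the basis $\xi_F$ — and deduce all three properties from it. Each assertion then reduces either to a general fact about intersection forms on oriented surfaces or to a straightforward algebraic manipulation.

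Property (1) is immediate: the intersection form on an oriented surface is skew-symmetric, so any of its Gram matrices is skew-symmetric. Property (3) is equivalent to (1), since $A^++(A^+)^{\intercal}=A^-+(A^-)^{\intercal}$ rearranges to $A^--A^+=-(A^--A^+)^{\intercal}$. For property (2) I would use that $F$ is a compact connected oriented surface of genus $g$ whose boundary $K=\partial F$ is a single knot, so $H_1(F;\mathbb{Z})\cong\mathbb{Z}^{2g}$ admits a symplectic basis in which the intersection form has standard matrix $\left(\begin{smallmatrix} 0 & I_g \\ -I_g & 0 \end{smallmatrix}\right)$, of determinant $1$. Because a change of basis by $P\in GL_{2g}(\mathbb{Z})$ rescales the determinant of a Gram matrix by $(\det P)^2=1$, the determinant in the basis $\xi_F$ is also $1$.

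For completeness I would sketch why $A^--A^+$ really does represent the intersection form in the virtual setting, since the excerpt only cites the classical analogue in \cite{bz} and the linking pairing in $\Sigma\times I$ is not symmetric. Geometrically, $\alpha_i^+$ and $\alpha_i^-$ cobound a thin annular tube $T_i$ in a tubular neighborhood of $F$ which meets $F$ transversely along a parallel copy of $\alpha_i$. After isotoping $\alpha_j$ on $F$ to be transverse to $\alpha_i$, the intersection $T_i\cap\alpha_j$ is a signed set of points whose algebraic count is $\alpha_i\bullet_F\alpha_j$. Computed in the group $H_1(\Sigma\times I\smallsetminus\alpha_j,\Sigma\times 1;\mathbb{Z})\cong\mathbb{Z}\langle\mu\rangle$ that defines $\lk_\Sigma$, this identifies $[\alpha_i^+]-[\alpha_i^-]=(\alpha_i\bullet_F\alpha_j)\,[\mu]$, and the $\mu$-coefficient is by definition $\lk_\Sigma(\alpha_i^+,\alpha_j)-\lk_\Sigma(\alpha_i^-,\alpha_j)$. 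The sign conventions are arranged so that this picks out $A^--A^+$ rather than $A^+-A^-$.

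The only real obstacle is this last geometric identification: one must arrange the tube $T_i$ to lie in $\Sigma\times I$ away from $\Sigma\times 1$ (so it genuinely computes the relative linking number), keep it disjoint from $\alpha_j$ except at the prescribed crossings, and match the sign bookkeeping for the $\pm$ pushoff directions with the convention adopted for $\lk_\Sigma$ in Section~\ref{sec_background}. Once the identification is in hand, (1)–(3) are formal consequences.
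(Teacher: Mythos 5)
Your proof is correct in substance. Whereas the paper disposes of items (1) and (2) by citing \cite{acpaper}, Lemma 7.5, and \cite{bcg2}, Lemma 2.1, and derives (3) from (1), you give a self-contained argument: first establish that $A^--A^+$ is a Gram matrix of the intersection form on $F$, then read off (1) from skew-symmetry, (2) from unimodularity (the intersection form on a genus-$g$ surface with connected boundary admits a symplectic basis of determinant $1$, and $\det(PGP^{\intercal})=\det(G)$ for unimodular $P$), and (3) as a rearrangement of (1). The logical skeleton matches what the cited lemmas prove; you have just written it out.

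One small bookkeeping slip in your geometric sketch: with the convention of Section~\ref{sec_background}, the $\mu$-coefficient of $[\alpha_i^{\pm}]$ in $H_1(\Sigma\times I\smallsetminus \alpha_j,\Sigma\times 1;\mathbb{Z})$ is $\lk_\Sigma(\alpha_j,\alpha_i^{\pm})$, not $\lk_\Sigma(\alpha_i^{\pm},\alpha_j)$ --- the first argument of $\lk_\Sigma$ names the curve being removed. So your tube $T_i$ argument directly computes $\lk_\Sigma(\alpha_j,\alpha_i^{+})-\lk_\Sigma(\alpha_j,\alpha_i^{-})=\alpha_i\bullet_F\alpha_j$. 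This is harmless here: since $p_*([\alpha_i^{+}])=p_*([\alpha_i^{-}])=p_*([\alpha_i])$, the Cimasoni--Turaev relation~(\ref{eqn_ct}) adds the same correction term to $\lk_\Sigma(\alpha_j,\alpha_i^{\pm})$ for both signs, so the difference $\lk_\Sigma(\alpha_i^{+},\alpha_j)-\lk_\Sigma(\alpha_i^{-},\alpha_j)$ agrees with what you computed and the identification of $A^--A^+$ with the intersection form goes through. If you want to avoid this detour entirely, you can argue in the order matching $\theta^{\pm}$ via the $2$-chain characterization $\lk_\Sigma(x^{\pm},y)=x^{\pm}\bullet_{\Sigma\times I}B$ used in the proof of Theorem~\ref{thm_conc_imp_meta}, where $B$ is a $2$-chain cobounded by $y$ and curves in $\Sigma\times 1$.
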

\begin{proof} For  items (1) and (2), see \cite{acpaper}, Lemma 7.5. Item (3) follows directly from item (1). See also \cite{bcg2}, Lemma 2.1.
\end{proof}

As will be seen ahead, it is fruitful to study directed Seifert matrices both independently and as a pair $(A^+,A^-)$. As independent matrices $A^{\pm}$, each is controlled by a separate Alexander polynomial. 

\begin{definition}[Directed Alexander polynomials \cite{bcg2}] For directed Seifert matrices $A^{\pm}$, The \emph{directed Alexander polynomials} are defined by $\Delta_{K,F}^{\pm}(t)=\det(A^{\pm}-t \cdot (A^{\pm})^{\intercal})$.
\end{definition}

\begin{remark} \label{remark_alex_roots} Unlike the classical case, directed Alexander polynomials can have both $1$ and $-1$ as roots. This commonly occurs, as will be  see the examples in Section \ref{sec_examples} below. Here we will focus exclusively on the case that $-1$ is not a root, so that the theory of symmetric bilinear forms may be readily applied. The case that $1$ is not a root requires skew-symmetric bilinear forms. This is closely related to Turaev's theory of graded matrices \cite{turaev_cobordism} and will be considered by the authors in a separate paper. See Section \ref{sec_further}.
\end{remark}

\subsection{The virtual algebraic concordance group} \label{sec_uncoupled} Before defining $\mathscr{VG}^{\mathbb{F}}$, we recall the definition of the classical algebraic concordance group $\mathscr{G}^{\mathbb{F}}$, for $\mathbb{F}$ a field of characteristic $\chi(\mathbb{F}) \ne 2$. Let $A$ be a $2n \times 2n$ dimensional matrix. Then $A$ is said to be \emph{metabolic} (or \emph{algebraically slice}) if there is a matrix $P$ with entries in $\mathbb{F}$ such that $\det(P) \ne 0$ and $PAP^{\intercal}$ has block form:
\[
\left[\begin{array}{c|c} 0 & B \\ \hline C & D \end{array}\right],
\]  
where $0$ is the $n \times n$ matrix of zeros. Two even dimensional matrices $A_0,A_1$ are said to be \emph{concordant} if $A_1 \oplus -A_0$ is metabolic. If $A_0,A_1$ are concordant, we will write $A_0 \sim A_1$. To show this defines a group structure, Levine proved the following Witt cancellation property (see \cite{levine}, Lemma 1). Its proof is needed ahead in our proof of Lemma \ref{lemma_witt_cancel}. We give a somewhat different proof of Witt cancellation for the reader's convenience. 

\begin{lemma}\label{lemma_witt_cancel_0} Let $N$, $A$ be matrices over $\mathbb{F}$ of dimension $2k,2m$, respectively. Suppose that $N$ and $A\oplus N$ are metabolic. If $\det(N+N^{\intercal}) \ne 0$, then $A$ is metabolic. 
\end{lemma}
\begin{proof} For $W=\mathbb{F}^d$ a vector space over $\mathbb{F}$ and $v \in W$, let $v^* \in \text{Hom}(W,\mathbb{F})$ be the dual map $v^*(w)=v^{\intercal}w$. View $A$, $N$ as linear maps $\mathbb{F}^{2m} \to \mathbb{F}^{2m}$, $\mathbb{F}^{2k} \to \mathbb{F}^{2k}$. By hypothesis, we may write $\mathbb{F}^{2k}=\mathbb{F}^k\oplus \mathbb{F}^l$, where $k=l$ and  $(v \oplus 0)^*N$ vanishes on $\mathbb{F}^k$ for all $v \in \mathbb{F}^k$. Then the following diagram commutes for all $x \in \mathbb{F}^{2m}$, $y \in \mathbb{F}^k$, and $z \in \mathbb{F}^l$:
\[
\xymatrix{\mathbb{F}^k \oplus \mathbb{F}^l \ar[r]\ar[d]_N & \mathbb{F}^{2m} \oplus \mathbb{F}^k \oplus \mathbb{F}^l \ar[d]_{A \oplus N} & \ar[l] \ar[d]_A \mathbb{F}^{2m} \\
\mathbb{F}^k \oplus \mathbb{F}^l \ar[r] \ar[d]_{y^* \oplus z^*} & \mathbb{F}^{2m} \oplus \mathbb{F}^k \oplus \mathbb{F}^l \ar[d]_{x^*\oplus y^* \oplus z^*} & \ar[l] \mathbb{F}^{2m} \ar[d]_{x^*} \\
\mathbb{F} \ar@{=}[r] & \mathbb{F} & \ar@{=}[l]\mathbb{F}
}
\]
The horizontal arrows above are the canonical inclusions. Let $V \subset \mathbb{F}^{2m}\oplus \mathbb{F}^k \oplus \mathbb{F}^l$ satisfy $\dim(V)=m+k$ and $v^* (A\oplus N)(V)=0$ for all $v \in V$. Choose a basis for $V$ of the form:
\[
\{x_1\oplus y_1 \oplus z_1, \ldots,x_r\oplus y_r \oplus z_r,x_{r+1} \oplus y_{r+1} \oplus 0, \ldots, x_{r+s}\oplus y_{r+s} \oplus 0, 0 \oplus y_{r+s+1} \oplus 0,\ldots ,0 \oplus y_{m+k} \oplus 0 \}, 
\]
with the sets $\{z_1,\ldots,z_r\}$, $\{x_{r+1},\ldots,x_{r+s}\}$, and $\{y_{r+s+1},\ldots,y_{m+k}\}$ all linearly independent. Applying the middle vertical composition in the diagram to the elements $x_{r+1} \oplus y_{r+1} \oplus 0,\ldots, x_{r+s} \oplus y_{r+s} \oplus 0$, it follows that $x_i^* A(x_j)=0$ (since $(y\oplus 0)^*N(w\oplus 0)=0$ for $y,w \in \mathbb{F}^k$). We must show $s \ge m$. 
\newline
\newline
Again using the diagram, we obtain $(y_i \oplus 0)^* N (0\oplus z_j)=0$ and $(0 \oplus z_j)^* N (y_i\oplus 0)=0$ for $i \ge r+s+1$.  Then $(y_i \oplus 0)^* (N+N^{\intercal}) (0\oplus z_j)=0$. Hence, $(N+N^{\intercal})(y_i \oplus 0) \in \langle z_1 ,\ldots,z_r \rangle^{\perp} \cap \mathbb{F}^l$ for $r+s+1 \le i \le m+k$. Since $\det(N+N^{\intercal}) \ne 0$,  we have $m+k-(r+s) \le k-r$. Thus, $s \ge m$.
\end{proof}

From this Witt cancellation property, it follows that $\sim$ defines an equivalence relation on the set of even dimensional matrices $A$ satisfying $\det(A+A^{\intercal}) \ne 0$. Furthermore, block sum $\oplus$ defines commutative and associative operation. For more details, we refer the reader to Levine \cite{levine}, Section 3, or Livingston-Naik \cite{livingston_naik}, Theorem 4.5.4. If $\det((A-A^{\intercal})(A+A^{\intercal})) \ne 0$, then $A$ is called an \emph{$\mathbb{F}$-Seifert matrix}. From Lemma \ref{lemma_witt_cancel_0}. Setting $0$ to be the concordance class of metabolic matrices, the $\sim$-equivalence classes of $\mathbb{F}$-Seifert matrices then form a group, where the additive inverse of $A$ is $-A$. This group is called the \emph{algebraic concordance group} over $\mathbb{F}$, denoted $\mathscr{G}^{\mathbb{F}}$.
\newline
\newline
Now consider the directed Seifert matrices from Section \ref{sec_review_sf}. Our goal is to apply the theory of regular quadratic forms to symmetrized directed Seifert matrices $A+A^{\intercal}$. By Remark \ref{remark_alex_roots}, $\det(A-A^{\intercal})$ can be $0$. Hence, we define a  \emph{$\mathbb{F}$-directed (Seifert) matrix} to be a $2n \times 2n$ dimensional matrix over $\mathbb{F}$ satisfying only $\det(A+A^{\intercal}) \ne 0$. Again by Lemma \ref{lemma_witt_cancel_0}, the concordances classes of directed matrices over $\mathbb{F}$ form a group with operation $\oplus$ and additive identity $0$.

\begin{definition}[Virtual algebraic concordance group] \label{defn_uncoupled} The virtual algebraic concordance group, denoted $\mathscr{VG}^{\mathbb{F}}$, is the set $\sim$-equivalence classes of $\mathbb{F}$-directed matrices. 
\end{definition}

\begin{example} \label{example_mnmat_1} The following directed matrices will serve as a running example throughout the text. Let $m,n \ge 1$ be natural numbers. Define:
\[
A^+_0=\begin{bmatrix} m & 0 \\ 0 & -n \end{bmatrix}, \quad \quad A^-_0=\begin{bmatrix} m & 1 \\ -1 & -n \end{bmatrix}
\]
Note that $A_0^{\pm}$ satisfy the conclusion of Lemma \ref{lemma_seif_mat_facts}, since $A_0^--A_0^+$ is skew-symmetric, $\det(A_0^--A_0^+)=1$, and $A_0^++(A_0^+)^{\intercal}=A_0^-+(A_0^-)^{\intercal}$. Furthermore, we have that $\det(A_0^++(A_0^+)^{\intercal})=-4mn \ne 0$ and $\det(A_0^+-(A_0^+)^{\intercal})=0$. Thus, $A_0^+$ represents an element of $\mathscr{VG}^{\mathbb{Q}}$ that is not in $\mathscr{G}^{\mathbb{Q}}$. On the other hand, $\det(A_0^--(A_0^-)^{\intercal})=4$. Hence, $A_0^-$ represents an element of both $\mathscr{VG}^{\mathbb{Q}}$ and $\mathscr{G}^{\mathbb{Q}}$. \hfill $\square$
\end{example}

\subsection{Directed isometric structures} Efficient calculation in $\mathscr{G}^{\mathbb{F}}$ is made possible using the theory of isometric structures (Milnor \cite{milnor_isom}). This is also the case for $\mathscr{VG}^{\mathbb{F}}$, as we explain in this section. Recall that an \emph{isometric structure} is triple $(V,B,S)$ where $V$ is an even dimensional vector space over $\mathbb{F}$, $B: V \times V \to \mathbb{F}$ is a regular symmetric bilinear form on $V$ and $S$ is an isometry of $B$. A triple is called \emph{metabolic} if $B$ has an $S$-invariant metabolic subspace $W \subset V$. Isometric structures $(V_0,B_0,S_0), (V_1,B_1,S_1)$ are \emph{concordant} if $(V_1 \oplus V_0,B_1 \oplus -B_0,S_1 \oplus S_0)$ is metabolic. Let $\Delta_S(t)=\det(S-tI)$ denote the characteristic polynomial of $S$. An isometric structure $(V,B,S)$ is called \emph{admissible} if $\Delta_S(1)\Delta_S(-1) \ne 0$. Denote by $\mathscr{G}_{\mathbb{F}}$ the set of concordance classes of admissible isometric structures. For the case of directed matrices, we must allow for the case that $1$ is a root of the directed Alexander polynomial. This motivates the following definition.

\begin{definition}[Directed isometric structure] \label{defn_directed} A \emph{directed isometric structure} is an isometric structure $(V,B,S)$ as above such that $\Delta_S(-1) \ne 0$. The set of concordance classes with operation $\oplus$ forms an abelian group $\mathscr{VG}_{\mathbb{F}}$ with operation $\oplus$ and additive identity the concordance class of metabolic directed isometric structures. 
\end{definition}

In \cite{levine}, Levine proved that there is an isomorphism $ \eta_{\mathbb{F}}: \mathscr{G}^{\mathbb{F}} \to \mathscr{G}_{\mathbb{F}}$. Thus, one can work directly with symmetric bilinear forms and their isometries. The map $\eta_{\mathbb{F}}$ extends to $\mathscr{VG}^{\mathbb{F}}$ and gives an isomorphism with $\mathscr{VG}_{\mathbb{F}}$. The argument is sketched below as the map $\eta_{\mathbb{F}}$ is needed in our calculations.

\begin{theorem} \label{thm_eta_iso} The map $\eta_{\mathbb{F}}:\mathscr{VG}^{\mathbb{F}} \to \mathscr{VG}_{\mathbb{F}}$ defined by $\eta_{\mathbb{F}}(A)=(\mathbb{F}^{2n},A+A^{\intercal},A^{-1}A^{\intercal})$, where $A$ is non-singular directed matrix of rank $2n$, is an isomorphism.
\end{theorem}

\begin{proof}[Sketch of proof] It needs only be shown that the weakened hypotheses of Definition \ref{defn_directed} suffice to prove that $\eta_{\mathbb{F}}$ is an isomorphism. First recall that every singular $\mathbb{F}$-Seifert matrix is concordant to a non-singular $\mathbb{F}$-Seifert matrix or the empty matrix (see \cite{levine}, Lemma 8, or \cite{livingston_naik}, Theorem 4.72). The algorithm for finding a non-singular representative also works for $\mathbb{F}$-directed matrices without alteration. Thus it may be assumed that $\eta_{\mathbb{F}}$ is defined on non-singular representatives of each concordance class.
\newline
\newline
That $A+A^{\intercal}$ defines a symmetric bilinear form is clear. To show that $A^{-1}A^{\intercal}$ is an isometry, observe that:
\[
(A^{-1}A^{\intercal})^{\intercal}(A+A^{\intercal})A^{-1}A^{\intercal}= A (A^{\intercal})^{-1}AA^{-1}A^{\intercal}+A (A^{\intercal})^{-1}A^{\intercal}A^{-1}A^{\intercal}=A+A^{\intercal}
\] 
Since $\Delta_{A^{-1}A^{\intercal}}(-1)=\det(A^{-1}A^{\intercal}+I)=\det(A^{-1})\det(A+A^{\intercal}) \ne 0$, it follows that $\eta_{\mathbb{F}}(A) \in \mathscr{VG}_{\mathbb{F}}$.
\newline
\newline
Given a directed isometric structure $(\mathbb{F}^{2n},B,S)$, set $A=B(I+S)^{-1}$. Since $\det(S+I)=\Delta_S(-1) \ne 0$, we have that $S+I$ is non-singular. This implies that  $(S^{-1}+I)=S^{-1}(I+S)$ is non-singular. Since $B$ is non-singular, $A$ is well-defined and non-singular. Now note that $A^{\intercal}=B(I+S^{-1})^{-1}$. This follows from the fact the following calculation and the fact that $S$ is an isometry:
\[
I=(B^{-1}+B^{-1}S^{\intercal})A^{\intercal}=(B^{-1}+S^{-1}B^{-1})A^{\intercal}=(I+S^{-1})B^{-1}A^{\intercal}. 
\]
Then we have:
\begin{eqnarray*}
A^{-1}A^{\intercal} &=& (I+S)B^{-1}B(I+S^{-1})^{-1}=(I+S)(S+I)^{-1}(S^{-1})^{-1}=S \\
A+A^{\intercal}&=& B(I+S)^{-1}+B(I+S^{-1})^{-1}=B((I+S)^{-1}+(I+S^{-1})^{-1})=BI=B.
\end{eqnarray*}
Hence, $A \in \mathscr{VG}^{\mathbb{F}}$ and $\eta_{\mathbb{F}}(A)=(V,B,S)$. It is easily seen that $\eta_{\mathbb{F}}$ is a well-defined homomorphism on concordance classes. From the above argument, it follows that $\eta_{\mathbb{F}}$ is surjective and injective.
\end{proof}

\begin{example} Consider again the $\mathbb{Q}$-directed matrices $A_0^{\pm}$ from Example \ref{example_mnmat_1} with $mn>1$. Set $B_0=A_0^{\pm}+(A_0^{\pm})^{\intercal}$ and $S^{\pm}=(A_0^{\pm})^{-1})(A_0^{\pm})^{\intercal}$. Then:
\[
B_0=\begin{bmatrix} 2m & 0 \\ 0 & -2n \end{bmatrix}, \quad S^+_0=\begin{bmatrix} 1 & 0 \\ 0 & 1 \end{bmatrix}, \quad S^-_0=\frac{1}{1-mn}\begin{bmatrix} -1-mn & 2n \\ 2m & -1-mn \end{bmatrix}
\]
Note that these give two different isometric structures $(\mathbb{Q}^2,B_0,S^+_0)$ and $(\mathbb{Q}^2,B_0,S^-_0)$ for the same symmetric rational bilinear form $B_0$. For the case $m=n=1$, $A^{\pm}_0$ are both metabolic. \hfill $\square$
\end{example}

\subsection{Proof Theorem \ref{thm_classify_VG}}  We first recall some well-known facts about decomposing general isometric structures into primary components (see Levine \cite{levine_2}, Milnor \cite{milnor_isom}). Here we follow \cite{livingston_naik}, Section 4.9. If $\lambda(t) \in \mathbb{F}[t]$, define $\bar{\lambda}(t)=t^d \lambda(t^{-1})$ where $d \in \mathbb{N}$ is chosen large enough so that $\bar{\lambda}(t) \in \mathbb{F}[t]$ but small enough so that its constant term is nonzero. An irreducible polynomial $\lambda(t) \in \mathbb{F}[t]$ over $\mathbb{F}$ is called \emph{non-symmetric} if $\lambda(t)$ and $\bar{\lambda}(t)$ have greatest common divisor $1$. An irreducible polynomial over $\mathbb{F}$ that is not non-symmetric is called \emph{symmetric}. For an arbitrary isometric structure (not necessarily admissible), it follows from \cite{levine_2}, Lemma 7a, that $\Delta_S(t)=a t^{2n}\Delta_S(t^{-1})$ for some $a \in \mathbb{F}$ and $2n=\dim(V)$. Thus, if $\lambda(t)$ is a non-symmetric irreducible factor of $\Delta_S(t)$, then so is $\bar{\lambda}(t)$.
\newline
\newline
Let $(V,B,S)$ be an isometric structure. For an irreducible factor $\lambda(t)$ of $\Delta_S(t)$, denote the $\lambda(t)$-primary component of $V$ by $V_{\lambda(t)}$. Here we are viewing $V$ and a module over $\mathbb{F}[t]$ (or $\mathbb{F}[t,t^{-1}]$) with action $t \cdot v=S(v)$. Then we may identify the $\lambda(t)$-primary component of $V$ with $\text{ker}\left( \lambda(t)^N\right)$ for $N$ sufficiently large. By \cite{levine_2}, Lemma 10, if $\lambda_1(t),\lambda_2(t)$ are irreducible factors of $\Delta_S(t)$ and $\lambda_1(t),\bar{\lambda}_2(t)$ are relatively prime, then $V_{\lambda_1(t)},V_{\lambda_2(t)}$ are orthogonal relative to $B$. Given the prime factorization of $\Delta_S(t)=\prod_i (\lambda_i(t))^{k_i}$ it follows that $V=\bigoplus_i V_{\lambda_i(t)}$. Note that if $\lambda(t)$ is non-symmetric, $\lambda(t),\bar{\lambda}(t)$ are relatively prime and hence $V$ has a direct summand of the form $V_{\lambda(t)} \oplus V_{\bar{\lambda}(t)}$. 
\newline
\newline
An isometric structure $(V,B,S)$ may therefore be decomposed as $(\bigoplus_i V_{\lambda_i(t)},\bigoplus_i B_{\lambda_i(t)},\bigoplus_i S_{\lambda_i(t)})$, where $B_{\lambda_i(t)}$ and $S_{\lambda_i(t)}$ are the restrictions of $B$ and $S$, respectively, to $V_{\lambda_i(t)}$.  We now apply these observations to $\mathscr{VG}_{\mathbb{F}}$. For $\lambda(t)$ irreducible, denote by $\mathscr{VG}_{\lambda(t)} \subset \mathscr{VG}_{\mathbb{F}}$ the subgroup generated by $\mathbb{F}$-directed isometric structures $(V,B,S)$ such that $\Delta_S(t)=(\lambda(t))^k$ for some $k$. Similarly one may define a subgroup $\mathscr{G}_{\lambda(t)}$ of $\mathscr{G}_{\mathbb{F}}$. 

\begin{lemma}\label{lemma_primary_decomposition} The classical and virtual algebraic concordance group have decompositions:
\[
\mathscr{G}_{\mathbb{F}} \cong \bigoplus_{\lambda(t) \ne t \pm 1} \mathscr{G}_{\lambda(t)}, \quad \mathscr{VG}_{\mathbb{F}} \cong \bigoplus_{\lambda(t) \ne t+1} \mathscr{VG}_{\lambda(t)}.
\]
The sums range over symmetric irreducible polynomials $\lambda(t) \in \mathbb{F}[t]$ with the stated restrictions.
\end{lemma}
\begin{proof} The first isomorphism was proved in Levine \cite{levine_2}, Lemma 11. Consider the decomposition of $\mathscr{VG}_{\mathbb{F}}$. For any directed isometric structure $(V,B,S)$, $t+1$ does not divide $\Delta_S(t)$ since $\Delta_S(-1) \ne 0$ by assumption. Thus, $\mathscr{VG}_{t+1}$ is empty. On the other hand, the subgroup $\mathscr{VG}_{\lambda(t)} \oplus \mathscr{VG}_{\bar{\lambda}(t)} \subset \mathscr{VG}_{\mathbb{F}}$ is trivial when $\lambda(t)$ is non-symmetric. Indeed, $V_{\lambda(t)}$ is a metabolic subspace of $V_{\lambda(t)} \oplus V_{\bar{\lambda}(t)}$ for $B_{\lambda(t)} \oplus B_{\bar{\lambda}(t)}$. Thus, $\mathscr{VG}_{\mathbb{F}}$ is the orthogonal sum of $\mathscr{VG}_{\lambda(t)}$ where $\lambda(t) \ne t+1$ is symmetric and irreducible over $\mathbb{F}$. This also gives the first isomorphism, since $\Delta_S(1) \ne 0$ for $(V,B,S) \in \mathscr{G}_{\mathbb{F}}$. 
\end{proof}

Denote by $\mathscr{W}(\mathbb{F})$ the Witt group of regular symmetric bilinear forms over $\mathbb{F}$ and let $\mathscr{I}(\mathbb{F}) \subset \mathscr{W}(\mathbb{F})$ be the subgroup consisting of those forms with even rank. In other words, $\mathscr{I}(\mathbb{F})$ is the fundamental ideal of the Witt ring $\mathscr{W}(\mathbb{F})$, considered as a group.

\begin{lemma}\label{lemma_t_minus_1_easy} The group $\mathscr{VG}_{t-1} \subset \mathscr{VG}_{\mathbb{F}}$ satisfies $\mathscr{VG}_{t-1} \cong \mathscr{I}(\mathbb{F})$.
\end{lemma}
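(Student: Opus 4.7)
The plan is to exhibit an explicit isomorphism. Define
\[
\phi : \mathscr{VG}_{t-1} \longrightarrow \mathscr{I}(\mathbb{F}), \qquad \phi\bigl([(V,B,S)]\bigr) = [B];
\]
the image lies in $\mathscr{I}(\mathbb{F})$ because $V$ is even-dimensional. To see $\phi$ is well-defined, observe that an $S$-invariant metabolic subspace of $(V,B,S)$ is in particular a $B$-isotropic subspace of half dimension, so any metabolic $\mathbb{F}$-directed isometric structure has trivial Witt class; applied to the orthogonal sum $(V_1 \oplus V_0,\, B_1 \oplus -B_0,\, S_1 \oplus S_0)$, this shows that concordance implies $[B_0]=[B_1]$. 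Additivity under $\oplus$ is immediate.

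For surjectivity, I would, given any class in $\mathscr{I}(\mathbb{F})$, choose a regular representative $B$ on an even-dimensional space $V$ and form the triple $(V, B, I)$. Since $\dim V$ is even, $\Delta_I(t) = (1-t)^{\dim V} = (t-1)^{\dim V}$, so $\Delta_I(0) = 1$ and $\Delta_I(-1) = 2^{\dim V} \neq 0$ because $\chi(\mathbb{F}) \ne 2$. Hence $(V, B, I)$ lies in $\mathscr{VG}_{t-1}$ and maps to $[B]$.

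The main obstacle is injectivity. Given $(V, B, S) \in \mathscr{VG}_{t-1}$ with $[B] = 0$ in $\mathscr{I}(\mathbb{F})$, one must produce an \emph{$S$-invariant} metabolic subspace, and an arbitrary metabolic subspace of $(V,B)$ need not be $S$-invariant. The plan is to induct on the nilpotency order $M$ of $N := S - I$. The base case $M = 1$ gives $S = I$, so any maximal $B$-isotropic subspace (which exists since $[B]=0$) is automatically $S$-invariant. For the inductive step $M \ge 2$, set $k := \lceil M/2 \rceil$ and $W := \mathrm{im}(N^k)$. Expanding $(I+N^T)\,B\,(I+N) = B$ and using that $N$ commutes with $S = I + N$ yields the identity
\[
(N^T)^j B = (-1)^j B\, N^j S^{-j}, \qquad j \ge 1,
\]
from which it follows that $W$ is $B$-isotropic (since $2k \ge M$ forces $N^{2k}=0$) and $W^\perp = \ker(N^k)$; both $W$ and $W^\perp$ are $S$-invariant, and $W \subseteq W^\perp$.

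Pass now to the quotient $V' := W^\perp / W$, equipped with the induced regular symmetric bilinear form $B'$ and induced isometry $S'$. Standard Witt theory gives $[B'] = [B] = 0$ in $\mathscr{W}(\mathbb{F})$, and the induced nilpotent $N' := S' - I$ satisfies $(N')^k = 0$, so its nilpotency order is at most $k \le M - 1$. The inductive hypothesis yields an $S'$-invariant maximal $B'$-isotropic subspace $W' \subset V'$, and its preimage $\widetilde W \subset W^\perp$ under the quotient map is $S$-invariant, $B$-isotropic, and has dimension $\dim W + \dim W' = \dim W + \tfrac{1}{2}(\dim W^\perp - \dim W) = \tfrac{1}{2} \dim V$. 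Hence $\widetilde W$ is an $S$-invariant metabolic subspace, completing the induction and establishing that $\phi$ is an isomorphism.
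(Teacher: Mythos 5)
Your proof is correct but takes a genuinely different route from the paper's. The paper works with the forward map $\mathscr{I}(\mathbb{F}) \to \mathscr{VG}_{t-1}$, $(V,B) \mapsto (V,B,I)$: well-definedness and injectivity are immediate because every subspace is $I$-invariant, and surjectivity is outsourced to Levine's Lemma 12 in \cite{levine_2}, which states that an isometric structure whose characteristic polynomial is a power of $t-1$ is either metabolic or concordant to one whose isometry has minimal polynomial $t-1$. You instead work with the inverse map $\phi(V,B,S)=[B]$ and put the weight on the injectivity of $\phi$: given $[B]=0$, you must exhibit an $S$-invariant Lagrangian, not merely a Lagrangian. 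Your induction on the nilpotency order $M$ of $N=S-I$ is a sound, self-contained substitute for the citation: the identity $(N^{\intercal})^{j}B=(-1)^{j}BN^{j}S^{-j}$ follows from $S^{\intercal}BS=B$ and the fact that $N$ commutes with $S^{-1}$; with $k=\lceil M/2\rceil$ this shows $W=\mathrm{im}(N^{k})$ is isotropic since $2k\ge M$ and that $W^{\perp}=\ker(N^{k})$ by a rank count; and the induced unipotent on $W^{\perp}/W$ has nilpotency bound $k\le M-1$ for $M\ge 2$, so the induction descends. What your route buys is an explicit construction of the invariant Lagrangian with no appeal to Levine's structural lemma; what the paper's route buys is brevity. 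One small housekeeping remark: when $W=W^{\perp}$ the quotient $V'$ is zero and the induction terminates at once with $\widetilde{W}=W$, a degenerate case your dimension count already handles.
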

\begin{proof} Let $B: V \times V \to \mathbb{F}$ be a regular symmetric bilinear form of rank $2n$. The identity map $I:V \to V$ is an isometry of $B$ and hence $(V,B,I)$ is an isometric structure. The characteristic polynomial of $I$ is $\Delta_I(t)=(t-1)^{2n}$. Then $\Delta_I(-1)=2^{2n} \ne 0$ and $\Delta_I(0)=1$. Thus, $(V,B,I)$ is an $\mathbb{F}$-directed isometric structure and there is a function $\mathscr{I}(\mathbb{F}) \to \mathscr{VG}_{t-1}$ defined by $(V,B) \to (V,B,I)$. If $(V_0,B_0), (V_1,B_1) \in \mathscr{I}(\mathbb{F})$ are Witt-equivalent, then $(V_1 \oplus V_0,B_1 \oplus -B_0)$ is metabolic. The metabolic subspace $W \subset V_1 \oplus V_0$ is invariant under the isometry $I_1 \oplus I_0: V_1 \oplus V_0 \to V_1 \oplus V_0$, where $I_i:V_i \to V_i$ is the identity map for $i=0,1$. Then $(V_1 \oplus V_0,B_1 \oplus -B_0, I_1 \oplus I_0)$ is metabolic as an $\mathbb{F}$-directed isometric structure and we conclude that the function $\mathscr{I}(\mathbb{F}) \to \mathscr{VG}_{t-1}$ is well-defined. The map is also injective, since the definitions of metabolic coincide for $\mathscr{I}(\mathbb{F})$ and $\mathscr{VG}_{t-1}$. 
\newline
\newline
It therefore remains to show that $\mathscr{I}(\mathbb{F}) \to \mathscr{VG}_{t-1}$ is surjective. Let $(V,B,S)$ be an $\mathbb{F}$-directed isometric structure with $\Delta_S(t)=(t-1)^{2n}$. By Levine \cite{levine_2}, Lemma 12, either $(V,B,S)$ is metabolic, or it is concordant to an isometric structure $(V',B',S')$ such that the minimal polynomial of $S'$ is $t-1$. Then it follows that $S'=I$ and hence the map  $\mathscr{I}(\mathbb{F}) \to \mathscr{VG}_{t-1}$ is surjective. 
\end{proof}

Combining these two previous lemmas, we can now prove Theorem \ref{thm_classify_VG}.

\setcounter{thm}{0}

\begin{thm} For a field $\mathbb{F}$ of characteristic $\chi(\mathbb{F}) \ne 2$, $\mathscr{VG}^{\mathbb{F}} \cong \mathscr{I}(\mathbb{F}) \oplus \mathscr{G}^{\mathbb{F}}$.
\end{thm}

\begin{proof} By Theorem \ref{thm_eta_iso}, $\mathscr{VG}^{\mathbb{F}} \cong \mathscr{VG}_{\mathbb{F}}$. Combining Lemmas \ref{lemma_primary_decomposition} and \ref{lemma_t_minus_1_easy}, we have: 
\[
\mathscr{VG}_{\mathbb{F}} \cong \mathscr{VG}_{t-1} \oplus \bigoplus_{\lambda(t) \ne t \pm 1} \mathscr{VG}_{\lambda(t)} \cong \mathscr{I}(\mathbb{F}) \oplus \bigoplus_{\lambda(t) \ne t \pm 1} \mathscr{VG}_{\lambda(t)}.
\] 
A given $\mathbb{F}$-directed isometric structure $(V,B,S)$ can be decomposed as $(V_{t-1} \oplus V',B_{t-1} \oplus B', S_{t-1} \oplus S')$, where $(V_{t-1},B_{t-1},S_{t-1})$ is the restriction of $(V,B,S)$ to the $(t-1)$-primary component. Moreover, we have that $t-1$ does not divide $\Delta_{S'}(t)$. Since $t+1$ also does not divide $\Delta_{S'}(t)$, it follows that $\Delta_{S'}(-1)\Delta_{S'}(1) \ne 0$. Hence, $(V',B',S')$ is in $\mathscr{G}_{\mathbb{F}}$. Therefore, $\bigoplus_{\lambda(t) \ne t \pm 1} \mathscr{VG}_{\lambda(t)} \cong \bigoplus_{\lambda(t) \ne t \pm 1} \mathscr{G}_{\lambda(t)}$. Lastly, we have that $\mathscr{G}^{\mathbb{F}} \cong \mathscr{G}_{\mathbb{F}} \cong \bigoplus_{\lambda(t) \ne t \pm 1} \mathscr{G}_{\lambda(t)}$ and hence the proof is complete. 
\end{proof}

As an immediate corollary of Theorem \ref{thm_classify_VG}, we classify the possible orders of torsion in $\mathscr{VG}^{\mathbb{Q}}$.

\begin{corollary} \label{cor_orders} The only possible torsion in $\mathscr{VG}^{\mathbb{Q}}$ is of order $1,2,4$, or $\infty$. 
\end{corollary}
\begin{proof} By Theorem \ref{thm_classify_VG}, $\mathscr{VG}^{\mathbb{Q}}\cong \mathscr{I}(\mathbb{Q}) \oplus \mathscr{G}^{\mathbb{Q}}$. By Levine \cite{levine_2}, Proposition 22b, $\mathscr{G}^{\mathbb{Q}} \cong \mathscr{G}_{\mathbb{Q}}$ has torsion of orders only $1,2,4,\infty$. Recall that $\mathscr{W}(\mathbb{Q}) \cong \mathscr{W}(\mathbb{R}) \oplus \coprod_p \mathscr{W}(\mathbb{Z}/p\mathbb{Z})$, for $p \in \mathbb{N}$ prime (Scharlau \cite{scharlau}, Theorem 5.3.4). Elements of $\mathscr{W}(\mathbb{R})$ are classified by their signature. Hence, $\mathscr{W}(\mathbb{R}) \cong \mathbb{Z}$. For $p=2$, $\mathscr{W}(\mathbb{Z}/2\mathbb{Z})\cong \mathbb{Z}/2\mathbb{Z}$, the ring of two elements. For $p \equiv 1 \pmod 4$, $\mathscr{W}(\mathbb{Z}/p\mathbb{Z}) \cong \mathbb{Z}/2\mathbb{Z} \oplus \mathbb{Z}/2\mathbb{Z}$. For $p \equiv 3 \pmod{4}$, $\mathscr{W}(\mathbb{Z}/p\mathbb{Z})\cong \mathbb{Z}/4\mathbb{Z}$. Thus, every element of $\mathscr{VG}^{\mathbb{Q}}$ has order $1,2,4$, or $\infty$.
\end{proof}

\begin{remark} Corollary \ref{cor_orders} also holds over any field $\mathbb{F}$ which is a finite extension of the rational numbers. In particular, if $\mathbb{F}$ is a global field of characteristic $\chi(\mathbb{F})=0$, $\mathscr{VG}^{\mathbb{F}}$ has finite torsion only of orders $1$, $2$, and $4$. For a proof, see version 1 of the present paper on the arXiv.
\end{remark}

\subsection{The coupled algebraic concordance group} \label{sec_coupled} Next we define a group extension of $\mathscr{VG}^{\mathbb{F}}$ that obstructs concordance to pairs of classical $\mathbb{F}$-Seifert matrices. The definition is motivated by the properties enjoyed by pairs $A^{\pm}$ of Seifert matrices of knots in thickened surfaces (see Lemma \ref{lemma_seif_mat_facts}).

\begin{definition}[$\mathbb{F}$-Seifert couple] \label{defn_F_Seifert_couple} A pair $(A^+,A^-)$ of $2n \times 2n$ matrices with coefficients in $\mathbb{F}$ will be called an $\mathbb{F}$-Seifert couple if $A^--A^+$ is skew-symmetric and $\det(A^--A^+) \ne 0$. $\mathbb{F}$-Seifert couples will be denoted with boldface letters as follows: $\textbf{A}=(A^+,A^-)$.  An $\mathbb{F}$-Seifert couple is \emph{admissible} if $A^++(A^+)^{\intercal}=A^-+(A^-)^{\intercal}$ is non-singular (see Lemma \ref{lemma_seif_mat_facts}). An $\mathbb{F}$-Seifert couple is said to be \emph{metabolic} if $A^+,A^-$ are simultaneously metabolic. That is, there is a matrix $P$ with entries in $\mathbb{F}$ such that $\det(P) \ne 0$ and $PA^{\pm}P^{\intercal}$ has a half-dimensional block of zeros in the upper left corner. The operation $\oplus$ on $\mathbb{F}$-Seifert couples $\textbf{A}_0,\textbf{A}_1$ is defined by $\textbf{A}_0 \oplus \textbf{A}_1=(A_0^+\oplus A_1^+,A_0^-\oplus A_1^-)$. Two $\mathbb{F}$-Seifert couples $\textbf{A}_0,\textbf{A}_1$ are said to be \emph{concordant} if $\textbf{A}_1\oplus-\textbf{A}_0$ is a metabolic couple. We write $\textbf{A}_0 \sim \textbf{A}_1$ when $\textbf{A}_0,\textbf{A}_1$ are concordant.
\end{definition}

\begin{remark} By a $\mathbb{Z}$-Seifert couple, we mean a pair $(A^+,A^-)$ of $2n$-dimensional matrices over $\mathbb{Z}$ such that $A^--A^+$ is skew-symmetric and $\det(A^--A^+)=1$. In this case, all relations in the above definition are defined up to unimodular congruence. Theorem \ref{lemma_witt_cancel_0} also holds in this case and hence the concordance classes of $\mathbb{Z}$-Seifert couples form a group.
\end{remark}

To show that the concordance classes of $\mathbb{F}$-Seifert couples form an abelian group, we need the again need a corresponding Witt-cancellation property. This is accomplished by the lemma below.

\begin{lemma}\label{lemma_witt_cancel} Let $\textbf{N}$, $\textbf{A}$ be $\mathbb{F}$-Seifert couples of dimension $2k,2m$, respectively. Suppose that $\textbf{N}$ and $\textbf{A} \oplus \textbf{N}$ are metabolic. If $\textbf{N}$ is admissible, then $\textbf{A}$ is metabolic. 
\end{lemma}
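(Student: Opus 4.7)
The plan is to adapt Levine's classical Witt-cancellation argument (\cite{levine}, Lemma 1; cf.\ Livingston-Naik \cite{livingston_naik}, Lemma 4.5.5) to the coupled setting, being careful that every reduction and every subspace is chosen simultaneously for the pair $(A^+, A^-)$. The feature that makes the simultaneous version viable is Lemma \ref{lemma_seif_mat_facts}(3): both matrices in any $\mathbb{F}$-Seifert couple share a common symmetrization $B = A^+ + (A^+)^\intercal = A^- + (A^-)^\intercal$. Admissibility of $\mathbf{N}$ is then precisely the statement that the associated symmetric form $B_N$ on $V_N = \mathbb{F}^{2k}$ is non-singular.

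First I would translate the matrix hypotheses into the language of bilinear forms on $V_A = \mathbb{F}^{2m}$ and $V_N = \mathbb{F}^{2k}$. Metabolicity of $\mathbf{A} \oplus \mathbf{N}$ supplies a subspace $W \subset V_A \oplus V_N$ of dimension $m+k$ that is \emph{simultaneously} isotropic for $A^+ \oplus N^+$ and $A^- \oplus N^-$; metabolicity of $\mathbf{N}$ supplies a subspace $W_N \subset V_N$ of dimension $k$ that is simultaneously isotropic for both $N^+$ and $N^-$. Since $W_N$ is isotropic for $B_N$, and $B_N$ is non-singular with $\dim W_N = \tfrac{1}{2} \dim V_N$, the subspace $W_N$ is Lagrangian for $B_N$; in particular $W_N^{\perp_{B_N}} = W_N$.

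The candidate metabolizer for $\mathbf{A}$ is then
\[
W_A \;=\; \pi_{V_A}\bigl(W \cap (V_A \oplus W_N)\bigr) \;\subset\; V_A,
\]
where $\pi_{V_A}$ denotes projection onto the first summand. The isotropy of $W_A$ for both forms $A^+$ and $A^-$ is immediate and uniform in the sign: for $a_1,a_2 \in W_A$ with lifts $(a_i, w_i) \in W$ and $w_i \in W_N$,
\[
A^\pm(a_1, a_2) \;=\; (A^\pm \oplus N^\pm)\bigl((a_1, w_1), (a_2, w_2)\bigr) \;-\; N^\pm(w_1, w_2) \;=\; 0 - 0 \;=\; 0,
\]
the first term vanishing by isotropy of $W$ for $A^\pm \oplus N^\pm$ and the second by isotropy of $W_N$ for $N^\pm$.

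The hard part will be verifying the dimension count $\dim W_A = m$, so that $W_A$ genuinely serves as the upper-left half of a basis in which both $A^+$ and $A^-$ acquire a zero $m \times m$ block. This dimension equals $\dim(W \cap (V_A \oplus W_N)) - \dim(W \cap (0 \oplus W_N))$, and I would control it by using the non-singularity of $B_N$ to identify $(V_A \oplus W_N)^{\perp_B} = \mathrm{rad}(B_A) \oplus W_N$ and combining this with $W \subseteq W^{\perp_B}$ (which holds because $W$ is isotropic for the shared symmetrization $B$). The most transparent presentation is to apply a simultaneous unimodular congruence bringing $N^+$ and $N^-$ into standard metabolic block form, with $W_N$ spanned by the first $k$ basis vectors of $V_N$; admissibility of $\mathbf{N}$ then forces the off-diagonal $k \times k$ block of $B_N$ to be an invertible matrix $C$, and the non-singularity of $C$ supplies exactly the rank needed to balance the excess of $\dim(W \cap (V_A \oplus W_N))$ over $m$ against the kernel $W \cap (0 \oplus W_N)$ of $\pi_{V_A}$. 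Once $\dim W_A = m$ is established, extending $W_A$ to a basis of $V_A$ and transporting by the corresponding simultaneous unimodular congruence puts $\mathbf{A}$ into metabolic block form and completes the proof.
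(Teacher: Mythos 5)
Your proposal is correct and takes essentially the same approach as the paper: both adapt Levine's Witt-cancellation argument to the simultaneous setting, with your $W_A = \pi_{V_A}\bigl(W \cap (V_A \oplus W_N)\bigr)$ being precisely the span of the paper's vectors $x_{r+1},\ldots,x_{r+s}$, and both use the non-singularity of the common symmetrization $B_N$ (equivalently, of the off-diagonal block of $N_0$ in the metabolic frame) to bound the kernel of the projection. One small slip: since $\mathbf{A}$ is not assumed admissible, $B_A$ may be singular, so the dimension count yields only $\dim W_A \geq m$ rather than $\dim W_A = m$; any $m$-dimensional subspace of the simultaneously isotropic $W_A$ then furnishes the required metabolizer.
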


\begin{proof} Here we will use the notation from the proof of Lemma \ref{lemma_witt_cancel_0}. Since $\textbf{N}$ is metabolic, there is a decomposition $\mathbb{F}^{2k} \cong \mathbb{F}^k \oplus \mathbb{F}^l$ such that $k=l$, $(v\oplus 0)^*N^+(\mathbb{F}^k\oplus 0)=(v\oplus 0)^*N^-(\mathbb{F}^k \oplus 0)=0$ for all $v \in \mathbb{F}^k$. Similarly, there is a subspace $V \subset \mathbb{F}^{2m} \oplus \mathbb{F}^k \oplus \mathbb{F}^l$ such that $v^*(A^+ \oplus N^+)(V)=v^*(A^- \oplus N^-)(V)=0$ for all $v \in V$. This uses the hypothesis that $N^{\pm}$ and $A^{\pm}\oplus N^{\pm}$ are simultaneously metabolic. As in the proof of Lemma \ref{lemma_witt_cancel_0}, we find a subspace $X \subset \mathbb{F}^{2m}$ such that $x^* A^+(X)=x^* A^-(X)=0$ for all $x \in X$. It needs only be shown that $\dim(X) \ge m$. Note that $N^++(N^+)^{\intercal}=N^-+(N^-)^{\intercal}$. This implies that the argument of Lemma \ref{lemma_witt_cancel_0} applies without alteration. Hence, $\dim(X)\ge m$. 
\end{proof}

As in the case of the virtual algebraic concordance group, Lemma \ref{lemma_witt_cancel} implies that $\sim$ is an equivalence relation on the set of admissible $\mathbb{F}$-Seifert couples and that $\oplus$ is a commutative and associative operation on $\sim$-equivalence classes (see again Levine \cite{levine}, Section 3). Denote by $\textbf{0}$ the concordance class of metabolic $\mathbb{F}$-Seifert couples.

\begin{definition}[The coupled algebraic concordance group] The group of $\sim$-equivalence classes of admissible $\mathbb{F}$-Seifert couples with operation $\oplus$ and additive identity $\textbf{0}$ is called the \emph{coupled algebraic concordance group} over $\mathbb{F}$. It is denoted $(\mathscr{VG},\mathscr{VG})^{\mathbb{F}}$. 
\end{definition}

The groups $\mathscr{G}^{\mathbb{F}}$, $\mathscr{VG}^{\mathbb{F}}$, and $(\mathscr{VG},\mathscr{VG})^{\mathbb{F}}$ are related as follows. First, there are the two projections: 
$$\pi^{\pm}: (\mathscr{VG},\mathscr{VG})^{\mathbb{F}} \to\mathscr{VG}^{\mathbb{F}}, \quad  \pi^{\pm}(A^+,A^-) = A^{\pm}.$$
Secondly, there is a map from the classical algebraic concordance group to the coupled one:
\[
\iota: \mathscr{G}^{\mathbb{F}} \to (\mathscr{VG},\mathscr{VG})^{\mathbb{F}}, \quad \iota(A)=(A,A^{\intercal})
\]
In the following lemmas, we show that $\mathscr{VG}^{\mathbb{F}}$ is a homomorphic image of $(\mathscr{VG},\mathscr{VG})^{\mathbb{F}}$ and $\mathscr{G}^{\mathbb{F}}$ embeds into $(\mathscr{VG},\mathscr{VG})^{\mathbb{F}}$. These results are then used to find an obstruction for a Seifert couple to be concordant to a Seifert couple of a classical knot.

\begin{lemma} \label{thm_GF_embeds} The map $\iota:\mathscr{G}^{\mathbb{F}} \to (\mathscr{VG},\mathscr{VG})^{\mathbb{F}}$ is an embedding.
\end{lemma}
\begin{proof} Since $A$ is an $\mathbb{F}$-Seifert matrix, $A+A^{\intercal}$ is non-singular. Since $A-A^{\intercal}$ is skew-symmetric, $(A,A^{\intercal})$ is an admissible $\mathbb{F}$-Seifert couple.  If $A \sim B$, then $(A \oplus -B,A^{\intercal} \oplus -B^{\intercal})$ is metabolic. In particular, if $A \sim 0$, then $(A,A^{\intercal}) \sim \textbf{0}$ and hence the map is an embedding. 
\end{proof}

\begin{lemma} The maps $\pi^{\pm}$ are surjective and $\mathscr{VG}^{\mathbb{F}}$ is a homomorphic image of $(\mathscr{VG},\mathscr{VG})^{\mathbb{F}}$.
\end{lemma}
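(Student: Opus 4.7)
The plan is to check three things in turn: (i) $\pi^{\pm}$ is well-defined on concordance classes, (ii) $\pi^{\pm}$ is a group homomorphism, and (iii) $\pi^{\pm}$ is surjective. All three are essentially formal once one unpacks the definitions from Sections \ref{sec_coupled} and \ref{sec_uncoupled}.

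First, I would verify that $\pi^{\pm}$ sends admissible $\mathbb{F}$-Seifert couples to $\mathbb{F}$-directed matrices. By Definition \ref{defn_F_Seifert_couple}, admissibility of $(A^+,A^-)$ means $A^++(A^+)^{\intercal}=A^-+(A^-)^{\intercal}$ is non-singular, which is exactly the condition that $A^+$ and $A^-$ are both $\mathbb{F}$-directed matrices in the sense of Definition \ref{defn_uncoupled}. To show $\pi^{\pm}$ respects the equivalence relations, suppose $\textbf{A}_0=(A_0^+,A_0^-)\sim \textbf{A}_1=(A_1^+,A_1^-)$ in $(\mathscr{VG},\mathscr{VG})^{\mathbb{F}}$. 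Then $(A_1^+\oplus -A_0^+,\,A_1^-\oplus -A_0^-)$ is simultaneously metabolic: there is a single invertible matrix $P$ such that both $P(A_1^\pm\oplus -A_0^\pm)P^{\intercal}$ have an $n\times n$ block of zeros in the upper left corner. In particular, each of $A_1^+\oplus -A_0^+$ and $A_1^-\oplus -A_0^-$ is separately metabolic, so $\pi^{\pm}(\textbf{A}_0)\sim \pi^{\pm}(\textbf{A}_1)$ in $\mathscr{VG}^{\mathbb{F}}$.

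Next, $\pi^{\pm}$ is clearly compatible with $\oplus$, since by definition $(A_0^+,A_0^-)\oplus(A_1^+,A_1^-)=(A_0^+\oplus A_1^+,\,A_0^-\oplus A_1^-)$; projecting to the $\pm$ component commutes with block sum. Thus $\pi^{\pm}$ is a group homomorphism.

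The main point is therefore surjectivity. Given an $\mathbb{F}$-directed matrix $A$ of size $2n\times 2n$, I would produce an admissible partner by an explicit choice. Let
\[
H \;=\; \bigoplus_{i=1}^{n}\begin{bmatrix} 0 & 1 \\ -1 & 0 \end{bmatrix},
\]
which is skew-symmetric with $\det(H)=1$. To realize $A$ as $\pi^+(\textbf{A})$, set $\textbf{A}=(A,A+H)$; to realize it as $\pi^-(\textbf{A})$, set $\textbf{A}=(A-H,A)$. In either case the difference between the two entries of the couple is $\pm H$, which is skew-symmetric with nonzero determinant, and because $H+H^{\intercal}=0$ we have $(A+H)+(A+H)^{\intercal}=A+A^{\intercal}$, which is non-singular by hypothesis on $A$. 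Hence the couple is an admissible $\mathbb{F}$-Seifert couple and $\pi^{\pm}$ hits the class of $A$. This completes the proof.

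There is no genuine obstacle here; the only small point to be careful about is that the simultaneous metabolic condition on a couple implies the separate metabolic condition on each entry (using the \emph{same} change-of-basis matrix $P$), which is what makes $\pi^{\pm}$ descend to a well-defined map on concordance classes.
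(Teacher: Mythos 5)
Your proof is correct and takes essentially the same approach as the paper: the construction for surjectivity — pair an $\mathbb{F}$-directed matrix $A$ with $A\pm H$, where $H$ is a unimodular skew-symmetric matrix with the same symmetrization — is the paper's argument (the paper uses $H=\left[\begin{smallmatrix}0 & I\\ -I & 0\end{smallmatrix}\right]$, you use a block sum of $2\times 2$ hyperbolic blocks; either works). You spell out the well-definedness and homomorphism checks in more detail than the paper, which simply remarks that these follow from the definitions, but the content is the same.
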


\begin{proof} Every admissible $\mathbb{F}$-Seifert couple $(A^+,A^-)$ yields $\mathbb{F}$-directed matrices $A^+,A^-$ by definition. Definitions \ref{defn_F_Seifert_couple} and \ref{defn_uncoupled} imply that $\pi^{\pm}$ are well-defined. To show that $\pi^{\pm}$ are surjective, let $A$ be a $2k \times 2k$ dimensional $\mathbb{F}$-directed matrix and define:
\begin{equation}\label{defn_H}
H_{2k} = \begin{bmatrix}
0 & I_k \\
-I_k & 0
\end{bmatrix},
\end{equation}
where $I_k$ is the $k \times k$ identity matrix. Set $A^-=A+H_{2k}$ and $A^+=A$. Then $A^--A^+$ is skew-symmetric and $A^-+(A^-)^{\intercal}=A^++(A^+)^{\intercal}=A+A^{\intercal}$ is non-singular. Thus, $(A^+,A^-)$ is an admissible $\mathbb{F}$-Seifert couple and it follows that $\pi^+$ is surjective. Similarly, $\pi^-$ is surjective.
\end{proof}

\begin{proposition}\label{prop_g_in_eqlzr} The classical integral algebraic concordance group $\mathscr{G}^{\mathbb{\mathbb{Z}}}$ embeds into the equalizer $\text{Eq}(\pi^+,\pi^-) \subset (\mathscr{VG},\mathscr{VG})^{\mathbb{Q}}$ of $\pi^+$ and $\pi^-$. That is, $\pi^+(\iota(A)) \sim \pi^-(\iota(A))$ in $\mathscr{VG}^{\mathbb{Q}}$. In particular, if $\textbf{A}$ is an admissible $\mathbb{\mathbb{Q}}$-Seifert pair and $\pi^+(\textbf{A}) \not \sim \pi^-(\textbf{A})$ in $\mathscr{VG}^{\mathbb{\mathbb{Q}}}$, $(A^+,A^-)$ is not concordant to a Seifert couple of a classical knot in $S^3$.
\end{proposition}

\begin{proof} By Levine \cite{levine_2}, the inclusion map $\mathbb{Z} \to \mathbb{Q}$ gives an embedding $\mathscr{G}^{\mathbb{Z}} \to \mathscr{G}^{\mathbb{Q}}$. By Theorem \ref{thm_GF_embeds}, $\mathscr{G}^{\mathbb{Q}}$ embeds into $(\mathscr{VG},\mathscr{VG})^{\mathbb{Q}}$ via the map $A \to (A,A^{\intercal})$. If $A$ is a Seifert matrix of a knot $K$ in $S^3$, a Seifert matrix for the concordance inverse of $K$ is given by $-A^{\intercal}$. Thus, $A \oplus -A^{\intercal} \sim 0$ and we have that $A \sim A^{\intercal}$. This implies that $\pi^+(A,A^{\intercal}) \sim \pi^-(A,A^{\intercal})$ in $\mathscr{VG}^{\mathbb{Q}}$ and $(A,A^{\intercal})\in \text{Eq}(\pi^+,\pi^-)$.
\end{proof}

\subsection{$\mathbb{Z}/2\mathbb{Z}$-Quadratic forms $\&$ the Arf invariant} \label{sec_arf} Let $X$ be a vector space over $\mathbb{F}$. Recall that a quadratic form is a map $q:X \to \mathbb{F}$ such that $q(\alpha x)=\alpha^2q(x)$ for $\alpha \in \mathbb{F}$ and the map:
\[
B_q(x,y):=q(x+y)-q(x)-q(y)
\]
is a symmetric bilinear form. The form $B_q$ is called the bilinear form \emph{associated to $q$}. A quadratic form is said to be \emph{regular} if $B_q$ is regular, that is, if a matrix for $B_q$ is non-singular over $\mathbb{F}$. 
\newline
\newline
Given an $\mathbb{F}$-directed matrix $A$, there is a quadratic form $q_A:\mathbb{F}^{2n} \to \mathbb{F}$ defined by $q_A(x)=x^{\intercal}A x$. The following lemma shows that an $\mathbb{F}$-Seifert couple defines a unique quadratic form when $\mathbb{F}=\mathbb{Z}/2\mathbb{Z}$

\begin{lemma} Let $\mathbb{F}=\mathbb{Z}/2\mathbb{Z}$ and $(A^+,A^-)$ be an $\mathbb{F}$-Seifert couple. Then $q_{A^+}=q_{A^-}$
\end{lemma}

\begin{proof} Since $A^--A^+$ is skew-symmetric, we may assume after a change of basis that $A^--A^+=H$, where $H$ is defined as in Equation \ref{defn_H}. Note that for any $x \in \mathbb{F}^{2n}$, $x^{\intercal} H x \equiv 0 \pmod 2$. Hence, $x^{\intercal} A^- x \equiv x^{\intercal}A^+x \pmod{2}$ and the lemma is proved.
\end{proof}

\begin{definition}[Quadratic form of a Seifert couple] If $\textbf{A}=(A^+,A^-)$ is a $\mathbb{Z}/2\mathbb{Z}$-Seifert couple of dimension $2n$ its quadratic form is defined to be $q_{\textbf{A}}:=q_{A^+}=q_{A^-}$. 
\end{definition}

Next we recall some properties of the Arf invariant of a regular quadratic form $q:X \to \mathbb{Z}/2\mathbb{Z}$. The reader is referred to Scharlau \cite{scharlau}, Section 9.4, for more details. It is well known that the Arf invariant can be defined by the rule $\text{Arf}(q)=0$ if $q$ maps the majority of vectors in $X$ to $0$ and $\text{Arf}(q)=1$ if $q$ maps a majority of the vectors in $X$ to $1$. Denote by $\perp$ the orthogonal sum of quadratic spaces. Then the Arf invariant satisfies $\text{Arf}(q_1 \perp q_2)=\text{Arf}(q_1)+\text{Arf}(q_2)$ whenever $q_1:X_1 \to \mathbb{Z}/2\mathbb{Z}$ and $q_2:X_2 \to \mathbb{Z}/2\mathbb{Z}$ are regular. A quadratic form $q:X \to \mathbb{Z}/2\mathbb{Z}$ is said to be \emph{metabolic} if there is a half-dimensional subspace $W \subset X$ such that $q(W)=0$. If a regular quadratic space is metabolic, $\text{Arf}(q)=0$. For the Arf invariant of classical knots, see Kauffman \cite{on_knots}.

\begin{theorem} \label{thm_arf_obstructs} If $\textbf{A}_0,\textbf{A}_1$ are concordant $\mathbb{Z}/2\mathbb{Z}$-Seifert couples and $q_{\textbf{A}_0},q_{\textbf{A}_1}$ are regular, then:
$$\text{Arf}(q_{\textbf{A}_0})=\text{Arf}(q_{\textbf{A}_1}).$$
\end{theorem}

\begin{proof} By hypothesis, $A_1^+ \oplus -A_0^+$ is metabolic. Hence, $q_{A_1^+} \perp q_{A_0^+}$ is metabolic and $\text{Arf}(q_{A_1^+} \perp q_{A_0^+})=0$. The result now follows from the additivity of the Arf invariant.
\end{proof}

If $\textbf{A}$ is a $\mathbb{Z}$-Seifert couple of a Seifert surface $F \subset \Sigma \times I$, then the bilinear form associated to $q_{\textbf{A}}$ has a simple geometric interpretation. First, define a quadratic form $q_F$ on $H_1(F;\mathbb{Z}/2\mathbb{Z})$ by:
\[
q^{\pm}_{F}(x)=\theta^{\pm}(x,x)=\lk_{\Sigma}(x^{\pm},x) \pmod{2}.
\]
Then $q_F=q_{\textbf{A}}$ for some appropriately chosen basis. Denote the intersection forms on $F$, $\Sigma$ with the binary operations $\bullet_{F}$, $\bullet_{\Sigma}$, respectively. Define $\star_F:H_1(F;\mathbb{Z}/2\mathbb{Z}) \times H_1(F;\mathbb{Z}/2\mathbb{Z}) \to \mathbb{F}_2$ by:
\[
x\star_F y= x\bullet_F y+p_*(x)\bullet_{\Sigma} p_*(y) \pmod{2},
\]
where $p:\Sigma \times I \to \Sigma$ is projection onto the first factor. Then $\star_F$ is a symmetric bilinear form over $\mathbb{Z}/2\mathbb{Z}$. It is important to note that $\star_F$ is not necessarily regular. 

\begin{proposition} The form $\star_F$ is the bilinear form associated to the quadratic form $q_F=q_{\textbf{A}}$. 
\end{proposition} 
\begin{proof} The result follows from the following calculation.
\begin{align*}
q_{F}(x+y)&=\theta^{-}(x+y,x+y) \\
                  &=\theta^{-}(x,x)+\theta^-(x,y)+\theta^-(y,x)+\theta^-(y,y) \\
                  &=q_{F}(x)+q_{F}(y)+\lk_{\Sigma}(x^-,y)+\lk_{\Sigma}(y^-,x) \\
                  &=q_{F}(x)+q_{F}(y)+\lk_{\Sigma}(x,y^+)+\lk_{\Sigma}(y^-,x) \\
                  &=q_{F}(x)+q_{F}(y)+\lk_{\Sigma}(x,y^+)+p_*(x)\bullet_{\Sigma} p_*(y)+\lk_{\Sigma}(x,y^-)\\
                  &\equiv q_{F}(x)+q_{F}(y)+\lk_{\Sigma}(x,y^+)-\lk_{\Sigma}(x,y^-)+p_*(x)\bullet_{\Sigma} p_*(y) & \pmod{2} \quad\quad\quad \\
                  &\equiv q_{F}(x)+q_{F}(y)+ x \bullet_{F} y+p_*(x)\bullet_{\Sigma} p_*(y) & \pmod{2} \quad\quad\quad \\
                  & \equiv q_{F}(x)+q_{F}(y)+x\star_F y & \pmod{2}  \quad\quad\quad
\end{align*}
The fifth step above uses Equation (\ref{eqn_ct}). The next-to-last step uses the fact that $y^+-y^-=\partial z$ for some $2$-chain $z$ and hence $\lk_{\Sigma}(x,y^+)-\lk_{\Sigma}(x,y^-) \equiv x \bullet_{\Sigma \times I} z \equiv x \bullet_F y \pmod{2}$.
\end{proof}

\section{Geometric concordance} \label{sec_gc}

\subsection{Virtual Concordance of Seifert surfaces} \label{sec_virt_conc_defn} In \cite{bcg2}, it was proved that every $\mathbb{Z}$-Seifert couple is realized by a Seifert surface of some almost classical knot. The goal of this section is to define a geometric relation on Seifert surfaces, called \emph{virtual concordance}, that induces algebraically concordant $\mathbb{Z}$-Seifert couples. Following the definition, we give two concrete examples of virtually concordant Seifert surfaces. Theorem \ref{thm_B}, which relate algebraic and geometric concordance, is proved in Section \ref{sec_proof_thm_B}. A given knot $K \subset \Sigma \times I$ bounds many Seifert surfaces that are not virtually concordant. In Section \ref{sec_ambient_Z}, we show how to obtain all the virtual concordance classes of Seifert surfaces of $K$ from any Seifert surface of $K$. 

\begin{definition}[Virtual concordance of Seifert surfaces] \label{defn_virt_conc_Seif} Let $\Sigma_0,\Sigma_1$ be closed oriented surfaces. Two Seifert surfaces $F_0 \subset \Sigma_0 \times I, F_1 \subset \Sigma_1\times I$ of knots $\partial F_0=K_0,\partial F_1=K_1$ will be called \emph{virtually concordant} if there is a compact oriented $3$-manifold $W$, a properly embedded annulus $A$ in $W \times I$, and a compact, oriented, and two-sided $3$-manifold $M \subset W \times I$ such that $\partial W=\Sigma_1 \sqcup -\Sigma_0$, $\partial A=K_1 \sqcup -K_0$, $\partial M=F_1 \cup A \cup -F_0$ and $M \cap \partial (W \times I)= F_1 \sqcup -F_0$. A Seifert surface $F \subset \Sigma \times I$ that is concordant to a disc $D^2 \subset S^2 \times I$, will be called \emph{slice}.
\end{definition}

Every Seifert surface $F \subset \Sigma \times I$ is virtually concordant to itself, as is seen by setting $W=\Sigma \times I$, $M=F \times I$, and $A=\partial F \times I$. The relation can be easily seen to be symmetric and transitive, so that virtual concordance is an equivalence relation on Seifert surfaces.
\newline
\newline
In \cite{myers_new}, Myers defines two Seifert surfaces $F,F' \subset S^3$ to be concordant if there is an embedding $h:F \times I \to S^3 \times I$ such that $h(F \times 0)=F=h(F \times I) \cap (S^3 \times 0)$, $h(F \times 1)=F'=h(F \times I) \cap (S^3 \times 1)$, and $\partial (S^3 \times I) \cap \partial h(F \times I)=F \cup -F'$. This differs from virtual concordance in two ways. First, the cobordism $M$ in Definition \ref{defn_virt_conc_Seif} need not be a cylinder $F \times I$. Secondly, a cobordism movie of a concordance between two classical Seifert surfaces in $S^3$ has the same ambient space at each time $t$. Every frame of the movie is drawn in $S^3 \times t$ for some $t \in [0,1]$. On the other hand, the ambient space of a time slice in a virtual concordance need not be constant. If $W$ is as in Definition \ref{defn_virt_conc_Seif} and $f:W \to I$ is a Morse function with $f^{-1}(0)=\Sigma_0$ and $f^{-1}(1)=\Sigma_1$, a time slice is of the form $f^{-1}(t) \times I$ for some regular value $t \in I$. The difference between a constant and non-constant ambient space is illustrated in the two examples below. A third example appears in Section \ref{sec_vss}.

\begin{example} \label{example_first_virt_conc} A movie of a virtual concordance between two discs in $S^2 \times I$ is shown in Figure \ref{fig_v_conc_discs}. The gray background in each frame represents a neighborhood of $S^2$. Set $W=S^2 \times I$. The frames of the movie depict cross-sections $S^2 \times t$ of $W$ where $t=0,\tfrac{1}{2},1$. The discs are colored cyan, where $D_0 \subset (S^2 \times 0) \times I$ appears in the $t=0$ frame, and $D_1 \subset (S^2 \times 1) \times I$ appears in the $t=1$ frame. Note that the coordinate ``$\times I$'' of $W \times I$ will be visible only through the projection $(S^2 \times t) \times I \to S^2 \times t$ onto the cross-section at $t$. The cobordism $M \subset W \times I$ passes through a saddle at $t=\frac{1}{4}$ and a death at $t=\tfrac{3}{4}$. The death can be visualized as the small gray circle shrinking to a point and then disappearing. The $3$-manifold $M$ has boundary $D_1 \cup -D_0 \cup A$ where $A$ is an annulus. The cobordism $M$ between $D_0$ and $D_1$ is a $3$-ball $D^2 \times I \subset W \times I$. \hfill $\square$
\end{example}

\begin{figure}[htb]
    \centering
    \def\svgwidth{5in}
\begingroup%
  \makeatletter%
  \providecommand\color[2][]{%
    \errmessage{(Inkscape) Color is used for the text in Inkscape, but the package 'color.sty' is not loaded}%
    \renewcommand\color[2][]{}%
  }%
  \providecommand\transparent[1]{%
    \errmessage{(Inkscape) Transparency is used (non-zero) for the text in Inkscape, but the package 'transparent.sty' is not loaded}%
    \renewcommand\transparent[1]{}%
  }%
  \providecommand\rotatebox[2]{#2}%
  \newcommand*\fsize{\dimexpr\f@size pt\relax}%
  \newcommand*\lineheight[1]{\fontsize{\fsize}{#1\fsize}\selectfont}%
  \ifx\svgwidth\undefined%
    \setlength{\unitlength}{516.68006181bp}%
    \ifx\svgscale\undefined%
      \relax%
    \else%
      \setlength{\unitlength}{\unitlength * \real{\svgscale}}%
    \fi%
  \else%
    \setlength{\unitlength}{\svgwidth}%
  \fi%
  \global\let\svgwidth\undefined%
  \global\let\svgscale\undefined%
  \makeatother%
  \begin{picture}(1,0.25188995)%
    \lineheight{1}%
    \setlength\tabcolsep{0pt}%
    \put(0,0){\includegraphics[width=\unitlength]{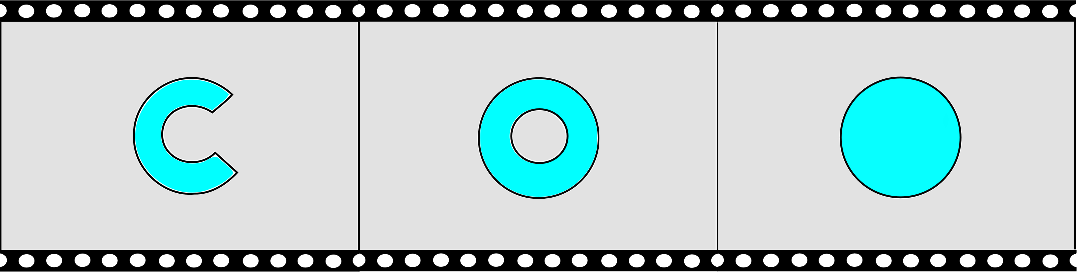}}%
    \put(0.2445374,0.04142458){\color[rgb]{0,0,0}\makebox(0,0)[lt]{\lineheight{40.54999924}\smash{\begin{tabular}[t]{l}$t=0$\end{tabular}}}}%
    \put(0.57213242,0.04015977){\color[rgb]{0,0,0}\makebox(0,0)[lt]{\lineheight{40.54999924}\smash{\begin{tabular}[t]{l}$t=\tfrac{1}{2}$\end{tabular}}}}%
    \put(0.92502421,0.04268944){\color[rgb]{0,0,0}\makebox(0,0)[lt]{\lineheight{40.54999924}\smash{\begin{tabular}[t]{l}$t=1$\end{tabular}}}}%
  \end{picture}%
\endgroup%

    \caption{A virtual concordance between two discs $D_0 \subset S^2 \times I$ ($t=0$) and $D_1 \subset S^2 \times I$ ($t=1$). See Example \ref{example_first_virt_conc}.}
    \label{fig_v_conc_discs}
\end{figure}

\begin{example}  \label{example_second_virt_conc} For any Seifert surface $F_1 \subset \Sigma_1 \times I$, a non-trivial virtual concordance can be constructed as follows. See Figure \ref{fig_conc_movie}, $t=1$. Here we have drawn a small band $\approx I \times I$ (cyan) in $F_1$ lying above a small ball (gray) contained in $\Sigma_1$. At $t=0$, we see a (gray) surface $\Sigma_0$ obtained from $\Sigma_1$ by attaching a $1$-handle. The $1$-handle can be seen by identifying the two circular ``cigarette burns''\footnote{Films shown in movie theaters have ``cigarette burns'' in the upper right hand corner that indicate the time at which the projectionist should change reels.} in the filmstrip. Note that this is essentially the standard Heegard diagram notation (see Gompf-Stipsicz \cite{gs}). In the $t=0$ frame, the band (cyan) is knotted into a tangle that passes over the added $1$-handle. Outside the depicted region, the surface $F_0$ is identical to $F_1$. To show that $F_0$ and $F_1$ are virtually concordant, we draw a movie which shows both the $3$-manifolds $M$ and $W$ that are required by Definition \ref{defn_virt_conc_Seif}. In Figure \ref{fig_conc_movie}, the cobordism $M$ between $F_0$ and $F_1$ is colored in cyan while the ambient space $W$ is drawn in gray. As in Example \ref{example_first_virt_conc}, the ``thickened'' factor of $W \times I$ is not drawn and instead the cross-sections of $M$ are projected onto the cross-sections of $W$. 
\newline
\newline
First we describe how Figure \ref{fig_conc_movie} depicts the cobordism $W$. The space $W$ is given by $W=(\Sigma_0 \times [0,\tfrac{1}{2}]) \cup (2\text{-handle}) \cup (\Sigma_1 \times [\tfrac{1}{2},1])$. In Figure \ref{fig_conc_movie}, the frames with $0 \le t < \tfrac{1}{2}$ show the surface $\Sigma_0$ and those with $\tfrac{1}{2} <t \le 1$ show the surface $\Sigma_1$. At $t=\tfrac{1}{2}$, the magenta curve represents the attaching sphere of a $3$-dimensional $2$-handle. Note we are again using the standard Heegard diagram notation to depict the attaching sphere as a curve on $\Sigma_0$. This added $2$-handle cancels the $1$-handle. Hence, $\partial W=\Sigma_1 \sqcup -\Sigma_0$.
\newline
\newline
Next we describe the cobordism $M$ between $F_0$ and $F_1$. At $t=0$, $M$ is $F_0$. On the interval $\left[0,\frac{1}{4}\right]$, the movie shows the addition of a $3$-dimensional $1$-handle to $F_0 \times [0,\varepsilon]$ for some $0<\varepsilon <\frac{1}{4}$. This can be most easily visualized by thickening the graph of standard saddle $z=x^2-y^2$ in $\mathbb{R}^3$. After an isotopy, we obtain the frame at $t=\frac{3}{8}$. The cross-section of $M$ then remains  unchanged up to $t=\frac{5}{8}$. At $t=\frac{5}{8}$, a $3$-dimensional $2$-handle is added. This is visualized as thickening the graph of a standard $2$-handle $z=-x^2-y^2$ in $\mathbb{R}^3$. Over time, we see the interior circle shrink to a point. Then the exterior circle shrinks to a point, leaving just $F_1$ at $t=1$. Note that the movie shows that the knots $K_0=\partial F_0$ and $K_1=\partial F_1$ are connected by an annulus in $W \times I$. Indeed, the boundaries of the cross-sections of $M$ undergo two saddles and two deaths between $t=0$ and $t=1$. Thus, $F_0$ and $F_1$ are virtually concordant. \hfill $\square$
\end{example}

\begin{figure}[htb]
\def\svgwidth{5in}\tiny
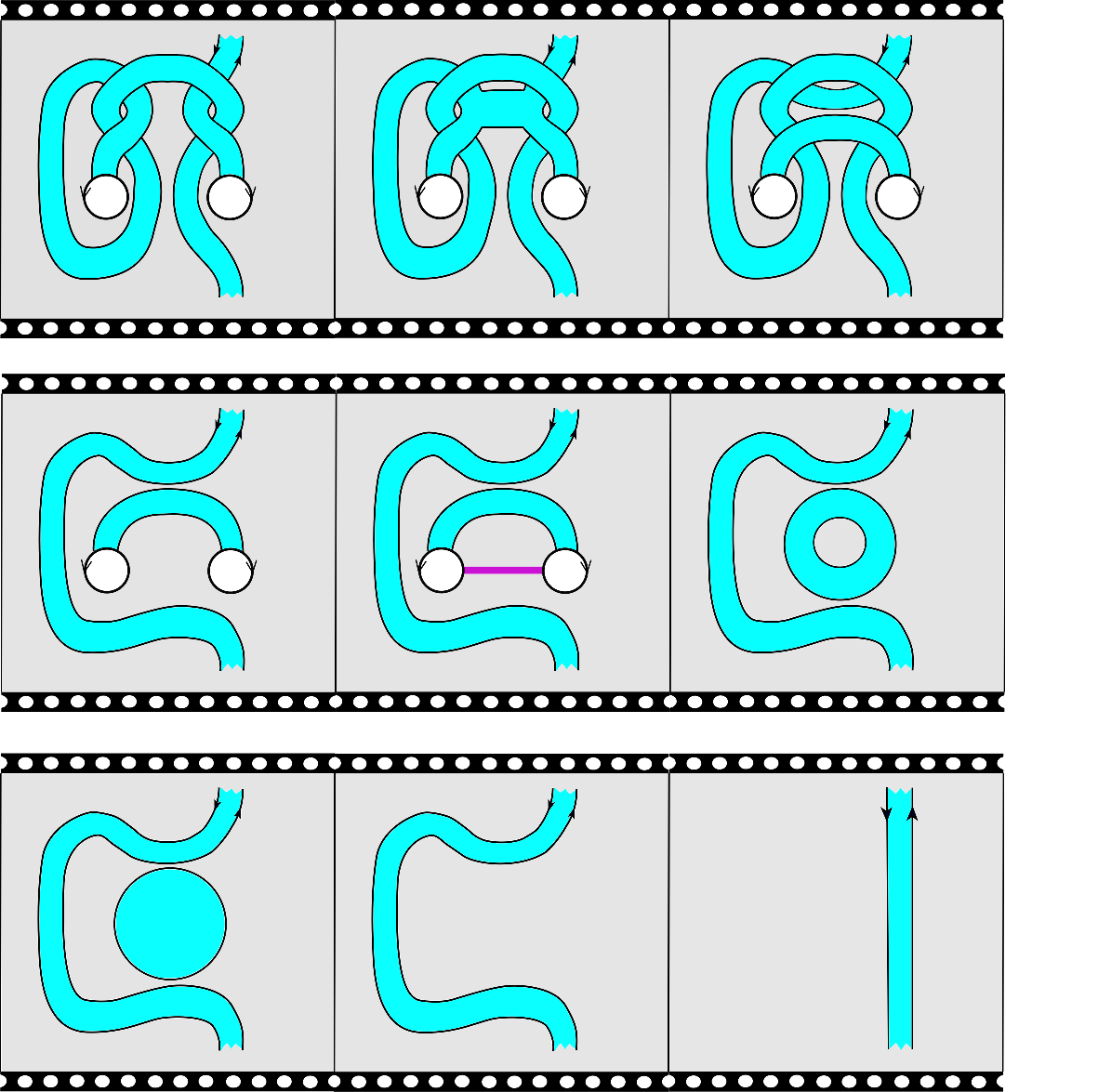
\normalsize
\caption{A virtual concordance between Seifert surfaces $F_0 \subset \Sigma_0 \times I$ ($t=0$) and $F_1 \subset \Sigma_1 \times I$ ($t=1$). See Example \ref{example_second_virt_conc}.} \label{fig_conc_movie}
\end{figure}

\subsection{Proof of Theorem \ref{thm_B}} \label{sec_proof_thm_B} The goal of this subsection is to prove that virtually concordant Seifert surfaces have concordant $\mathbb{Z}$-Seifert couples. An immediate consequence of Theorem \ref{thm_B} is that our Arf invariant and the directed signature functions of \cite{bcg2} are virtual concordance invariants of Seifert surfaces.  Our proof is based on Cimasoni-Turaev \cite{ct}, Lemma 5.1.

\begin{thm}\label{thm_conc_imp_meta} If $F_0 \subset \Sigma_0 \times I, F_1 \subset \Sigma_1 \times I$ are virtually concordant Seifert surfaces with $\mathbb{Z}$-Seifert couples $\textbf{A}_0, \textbf{A}_1$, respectively, then $\textbf{A}_0,\textbf{A}_1$ are concordant in $(\mathscr{VG},\mathscr{VG})^{\mathbb{Z}}$.
\end{thm}

\begin{proof} By hypothesis, there is a compact oriented $3$-manifold $W$, an annulus $A$ properly embedded in $W \times I$, and a compact, oriented, 2-sided  $3$-manifold $M$ in $W \times I$ such that $\partial W=\Sigma_1 \sqcup -\Sigma_0$, $\partial A=\partial F_1 \sqcup -\partial F_0$, $\partial M=F_1 \cup A \cup -F_0$, and $M \cap \partial (W \times I)=F_1 \sqcup -F_0$. By Lickorish \cite{lick_book}, Corollary 8.16, there is a basis $[\alpha_1],\ldots,[\alpha_{2g}]$ over $\mathbb{Z}$ for $H_1(\partial M;\mathbb{Z})$ such that $[\alpha_1],\ldots,[\alpha_g]$ map to zero in $H_1(M;\mathbb{Q})$. Here $g=g_0+g_1$ where $g_i=\text{genus}(F_i)$. Since $\mathbb{Z}$ is an integral domain and the directed Seifert forms for $F_0,F_1$ are bilinear, it is sufficient to prove the theorem in the case that $[\alpha_1],\ldots,[\alpha_g]$ themselves map to zero in $H_1(M;\mathbb{Z})$. Integer coefficients will now be assumed for the remainder of the proof. Since $H_1(\partial M)\cong H_1(F_0)\oplus H_1(F_1)$, we may write $[\alpha_i]=[\alpha_i']\oplus[\alpha_i'']$ for some simple closed curves $\alpha_i'$ on $F_0$ and $\alpha_i''$ on $F_1$. Let $\theta_i^{\pm}$ denote the directed Seifert forms on $F_i$ and define $\theta^{\pm}:(H_1(F_0)\oplus H_1(F_1))\times (H_1(F_0)\oplus H_1(F_1)) \to \mathbb{Z}$ by:
\begin{align*}
\theta^{\pm}(x_0 \oplus x_1,y_0 \oplus y_1) &=\theta_1^{\pm}(x_1,y_1)-\theta_0^{\pm}(x_0,y_0)=\lk_{\Sigma_1}(x_1^{\pm},y_1)-\lk_{\Sigma_0}(x_0^{\pm},y_0).
\end{align*}
Let $x_0 \oplus x_1,y_0 \oplus y_1 \in H_1(F_0)\oplus H_1(F_1)$. Let $B_i$ be a 2-chain in $\Sigma_i \times I$ cobounded by $y_i$ and a collection $z_i$ of oriented simple closed curves in $\Sigma_i \times 1$. It follows from the definition of $\lk_{\Sigma}$ that:
\[
\lk_{\Sigma_i}(x_i^{\pm},y_i)=x_i^{\pm} \bullet_{\Sigma_i \times I} B_i,
\]
where $-\bullet_{\Sigma_i \times I}-$ denotes the algebraic intersection number in $\Sigma_i \times I$. Then we have that:
\[
\theta^{\pm}(x_0 \oplus x_1,y_0\oplus y_1) =x_1^{\pm}\bullet_{\Sigma_1 \times I} B_1-x_0^{\pm}\bullet_{\Sigma_0 \times I} B_0
=(x_1^{\pm}+x_0^{\pm})\bullet_{\partial (W \times I)} (B_1+B_0).
\]
The above follows from the fact that $\Sigma_1 \times I \sqcup -\Sigma_0 \times I$ is in $\partial (W \times I)$ and $x_i^{\pm}$ has no geometric intersections with $B_j$ for $i \ne j$. Also note that since $M$ is 2-sided in $W \times I$, the maps $\pm:F_i \to\Sigma_i \times I\smallsetminus F_i$, which push in the $\pm$-normal direction, extend to maps $\pm:M \to W \times I \smallsetminus M$.
\newline
\newline
Next we apply the above calculation to the bases for $H_1(F_0),H_1(F_1)$. As $[\alpha_i]=0$ in $H_1(M)$, there is a a locally flat surface $\varphi_i$ in $M$ such that $[\partial \varphi_i]=[\alpha_i]=[\alpha_i']\oplus[\alpha_i'']$. Then we have:
\[
\theta^{\pm}(\alpha_i,\alpha_j)=((\alpha_i'')^{\pm}+(\alpha_i')^{\pm})\bullet_{\partial (W \times I)} (B_j''+B_j'),
\]
where $B_j' \subset \Sigma_0 \times I$ is a 2-chain cobounded by $\alpha_j'$ and some curves in $\Sigma_0 \times 1$ and $B_j''\subset\Sigma_1 \times I$ is a 2-chain cobounded by $\alpha_j''$ and some curves in $\Sigma_1 \times 1$. Consider $B_j''+B_j'-\varphi_j$. This is a relative $2$-cycle in $H_2(W \times I,W \times 1) \cong 0$. Thus, there is a 3-chain $\tau_j$ such that $\partial \tau_j-(B_j''+B_j'-\varphi_j)=\omega_j$ for some 2-chain $\omega_j$ in $W \times 1$. Hence, $[B_j''+B_j'-\varphi_j+\omega_j]=0$ in $H_2(W \times I)$.
\newline
\newline
Before proceeding with the proof, we make a general remark about algebraic intersection numbers. Recall that for any orientable surface $S$ in $W\times I$ with boundary in $\partial (W \times I)$ and an orientable surface $T \subset \partial W \times I$, we have that $\partial S \bullet_{\partial (W \times I)} T=S\bullet_{W \times I} T$. This can be seen by pushing a neighborhood in $T$ of each transverse intersection of $\partial S$ with $T$ into $W \times I$ so that $S$ and $T$ intersect in a point (see e.g. Freedman-Quinn \cite{freedman_quinn}, page 12).
\newline
\newline
Continuing with the proof, recall that $M$ intersects $\partial (W \times I)$ only in $F_1 \sqcup -F_0$. It follows that the push-offs $\varphi^{\pm}_i$ have no geometric intersections with $\omega_j$ for all $i,j$. Combining this fact with the observations from the previous paragraphs, it follows that:
\[
\theta^{\pm}(\alpha_i,\alpha_j) =\varphi_i^{\pm} \bullet_{W \times I} (B_j''+B_j')=\varphi_i^{\pm} \bullet_{W \times I} (\varphi_j-\omega_j)=\varphi_i^{\pm} \bullet_{W \times I} \varphi_j.
\]
Finally, since $\varphi_j \subset M$ and $\varphi_i^{\pm} \subset W \times I \smallsetminus M$, there are no geometric intersections of $\varphi_i^{\pm}$ and $\varphi_j$ for $1 \le i,j \le g$. This implies that $\theta^{\pm}(\alpha_i,\alpha_j)=0$ for  $1 \le i,j \le g$. It follows that the pair $(A^+_1\oplus-A_0^+,A_1^-\oplus -A_0^-)$ is a metabolic $\mathbb{Z}$-Seifert couple.
\end{proof}

\begin{corollary} Suppose $F_0 \subset \Sigma_0 \times I, F_1 \subset \Sigma_1 \times I$ are Seifert surfaces with corresponding $\mathbb{Z}/2\mathbb{Z}$-Seifert couples $\textbf{A}_0,\textbf{A}_1$. If $q_{\textbf{A}_0},q_{\textbf{A}_1}$ are regular and $F_0,F_1$ are virtually concordant, then: 
\[
\text{Arf}(q_{\textbf{A}_0})=\text{Arf}(q_{\textbf{A}_0}).
\]
\end{corollary}

\begin{proof} This follows from Theorem \ref{thm_B} and Theorem \ref{thm_arf_obstructs}.
\end{proof}

Similarly, the directed signature functions of \cite{bcg2} are virtual concordance invariants of Seifert surfaces. Let $\omega \in S^1\smallsetminus 1$ be a complex number and suppose that $\Delta_{K,F}^{\pm}(\omega)\ne 0$. Then $(1-\omega) A^{\pm}+(1-\bar{\omega})(A^{\pm})^{\intercal}$ is a non-singular Hermitian matrix. The \emph{directed signature function} defined by:
\[
\widehat{\sigma}_{\omega}^{\pm}(K,F)=\text{sign}((1-\omega) A^{\pm}+(1-\bar{\omega})(A^{\pm})^{\intercal}).
\]
\begin{corollary} If $F_0\subset \Sigma_0 \times I,F_1 \subset \Sigma_1 \times I$ are virtually concordant Seifert surfaces of knots $K_0=\partial F_0$, $K_1=\partial F_1$, and $\omega \in S^1 \smallsetminus 1$ satisfies $\Delta_{K_0,F_0}^{\pm} (\omega) \Delta^{\pm}_{K_1,F_1}(\omega) \ne 0$, then:
$$\widehat{\sigma}_{\omega}^{\pm}(K_0,F_0)=\widehat{\sigma}_{\omega}^{\pm}(K_1,F_1).$$
\end{corollary}
\begin{proof} Let $A_0^{\pm}$, $A_1^{\pm}$ be directed Seifert matrices for $F_0,F_1$, respectively. Since $\Delta_{K_0,F_0}^{\pm} (\omega)\ne 0$ and  $\Delta^{\pm}_{K_1,F_1}(\omega) \ne 0$, the directed signature function is defined at $\omega$. By Theorem \ref{thm_conc_imp_meta}, $(A_1^{+} \oplus -A_0^{+},A_1^{-} \oplus -A_0^{-})$ is metabolic. Since $\Delta_{K_0,F_0}^{\pm} (\omega) \Delta^{\pm}_{K_1,F_1}(\omega) \ne 0$, 
$(1-\omega)(A_1^{\pm}\oplus -A_0^{\pm})+(1-\bar{\omega})(A_1^{\pm}\oplus -A_0^{\pm})^{\intercal}$
is non-singular. These Hermitian matrices thus have signature $0$. The additivity of the signature function implies that $0=\widehat{\sigma}_{\omega}^{\pm}(K_1,F_1)-\widehat{\sigma}_{\omega}^{\pm}(K_0,F_0)$ and the result follows.
\end{proof}

\subsection{Changing the Seifert surface of a knot} \label{sec_ambient_Z} Two Seifert surfaces of a fixed homologically trivial knot in a $3$-manifold $M$ are said to be \emph{$S$-equivalent} if they may be obtained from one another by a finite sequence consisting of three operations: (1) ambient isotopy rel $K$, (2) addition/removal of a compressible $1$-handle and (3) addition/removal of a $2$-sphere bounding $3$-ball in $M$. Any two Seifert surfaces of a knot in $S^3$ are $S$-equivalent, but this is not true in general. For the case of $M=\Sigma \times I$, suppose $K \subset \Sigma \times I$ is homologically trivial. The long exact sequence of the pair $(\Sigma \times I,K)$ gives $H_2(\Sigma \times I,K) \cong H_1(K) \oplus H_2(\Sigma)$. Clearly, $S$-equivalent Seifert surfaces of $K$ must represent the same element of $H_2(\Sigma \times I,K)$. Since $H_2(\Sigma) \cong \mathbb{Z}$, there are infinitely many Seifert surfaces of $K$ that are not $S$-equivalent. However, $S$-equivalence classes can be classified as follows.

\begin{lemma} Seifert surfaces $F_0,F_1 \subset \Sigma \times I$ of $K$ are $S$-equivalent if and only if $[F_0]=[F_1] \in H_2(\Sigma \times I,K)$. In other words, $S$-equivalence is classified by $H_2(\Sigma \times I, K) \cong H_1(K) \oplus H_2(\Sigma)$.
\end{lemma}

\begin{proof} This follows from more general results already in the literature. See Moishezon-Mandelbaum \cite{MM}, Lemma 4 and Proposition 6, or Kaiser \cite{UK}, Theorems 4.1.5 and 5.1.11.
\end{proof}

Let $[\Sigma]$ denote the class of $\Sigma \times 1$ in $H_2(\Sigma \times I,K)$. If $F,F'$ are two Seifert surfaces of $K$, then $[F']-[F]=n \cdot [\Sigma] \in H_2(\Sigma \times I,K)$ for some $n \in \mathbb{Z}$. Since two Seifert surfaces of $K$ representing the same element of $H_2(\Sigma \times I,K)$ are $S$-equivalent, it follows that $F'$ is $S$-equivalent to a surface obtained from $F$ by taking the disjoint union with $|n|$ parallel copies of $ \pm\Sigma$, where the sign is determined by the sign of $n$. A surface constructed in this way from a Seifert surface $F \subset \Sigma \times I$ will be denoted by $F+n \cdot \Sigma$. One can ensure that $F+n \cdot \Sigma$ is a connected surface by joining $F$ and each parallel copy $\pm\Sigma \times \varepsilon \subset \Sigma \times I$ of $\Sigma$ by some compressible $1$-handle. The next result describes the effect of changing the Seifert surface $F$ of a knot on the virtual concordance class of $F$.

\begin{theorem} \label{thm_ambient_Z} Let $F_0,F_0' \subset \Sigma_0 \times I$ and $F_1 \subset \Sigma_1 \times I$ be Seifert surfaces and $n \in \mathbb{Z}$.
\begin{enumerate}
    \item If $F_0,F_0'$ are $S$-equivalent, then they are virtually concordant.
    \item If $F_0,F_1$ are virtually concordant, then $F_0+n\cdot\Sigma_0$ and $F_1+n\cdot \Sigma_1$ are virtually concordant.
\end{enumerate}
\end{theorem}

\begin{proof} The first claim follows immediately from the definitions. For the second claim, it is sufficient to show that $F_0+\Sigma_0$ and $F_1+\Sigma_1$ are virtually concordant. By the preceding remarks, it may be assumed that $F_i+\Sigma_i$ is obtained from $F_i$ by connecting $F_i$ to a parallel disjoint copy of $\Sigma_i \times 1$ by a single compressible $1$-handle $\tau_i$. The first claim implies that the choice of $\tau_i$ is immaterial, since any two choices are obtained by an $S$-equivalence.   
\newline
\newline
By definition, there exists a $3$-manifold $W$, an annulus $A$ in $W \times I$, and a $3$-manifold $M$ in $W \times I$ such that $\partial W=\Sigma_1 \sqcup -\Sigma_0$, $\partial M=F_1 \cup A \cup -F_0$, and $\partial A=\partial F_1 \sqcup -\partial F_0$. We may also assume that the parallel copies of $\Sigma_0 \times 1,\Sigma_1 \times 1$ in $F_0+\Sigma_0,F_1+\Sigma_1$ are at the same height $\varepsilon \in (0,1) \subset I$ in $\Sigma_0 \times I$, $\Sigma_1 \times I$, respectively. Since $M$ is compact, there is an $\varepsilon'>0$ such that $\varepsilon\le \varepsilon'<1$ and $M \cap (W \times \varepsilon')=\varnothing$. Applying an isotopy to $F_i+\Sigma_i$ that fixes $F_i$, it may be assumed that $\varepsilon=\varepsilon'$.
\newline
\newline
A cobordism between $F_0+\Sigma_0$ and $F_1+\Sigma_1$ may now be constructed as follows. First attach a $3$-dimensional $2$-handle to the belt sphere of $\tau_i$. More precisely, we attach to $W$ collars  $C_0=\Sigma_0 \times [0,1]$ and $C_1=\Sigma_1 \times [0,1]$. In $C_i \times I$, there is a cobordism $M_i$ between $F_i+\Sigma_i \subset \Sigma_i \times 0 \times I$ and $F_i \sqcup \Sigma_i \subset \Sigma_i \times 1 \times I$ that corresponds to attaching a $2$-handle $h_i^2$ along the belt sphere of $\tau_i$ in $C_i\times \tfrac{1}{2} \times I$. The cobordism between $F_0+\Sigma_0$ and $F_1+\Sigma_1$ is then $M_0 \cup (M \sqcup W \times \varepsilon) \cup M_1$. This cobordism lies in $(C_0 \cup W \cup C_1) \times I$ and satisfies the requirements of Definition \ref{defn_virt_conc_Seif}.
\end{proof}

\begin{proposition} Let $F_0 \subset \Sigma_0 \times I$ be a Seifert surface of $K_0$. If a Seifert surface $F_1 \subset \Sigma_1 \times I$ of $K_1$ is concordant to $F_0$ and $F_1' \subset \Sigma \times I$ is another Seifert surface for $K_1$, then there is a Seifert surface $F_0' \subset \Sigma_0 \times I$ of $K_0$ that is concordant to $F_1'$ and can be obtained from $F_0$ by a sequence of isotopies, $S$-equivalences, and connected sums with parallel copies of $(\pm\Sigma) \times 1$.
\end{proposition}

\begin{proof} There is an integer $n$ such that $F_1'$ is of the form $F_1+n \cdot \Sigma_1$. By Theorem \ref{thm_ambient_Z}, $F_1+n \Sigma_1$ is virtually concordant to $F_0+n \cdot \Sigma_0$. Any Seifert surface of $K_0$ having this form proves the claim. \end{proof}

\section{Calculations $\&$ Examples} \label{sec_examples}

\subsection{Virtual Seifert surfaces} \label{sec_vss} Virtual Seifert surfaces \cite{vss} can be used to simplify calculations involving almost classical knots. First, a \emph{virtual disc-band surface} $F$ is a figure on $S^2$ consisting of a single $0$-handle $F^0$ and $1$-handles $F^1_1,\ldots,F_n^1$ attached to $\partial F^0$ so that the $1$-handles intersect themselves and each other in only classical crossings of bands or virtual crossings of bands (see Figure \ref{fig_virt_seif_defn}, center). A virtual disc-band surface may be converted to a disc-band surface on some $\Sigma \times I$ as follows. View the $2$-sphere on which $F$ is drawn as the boundary of a $3$-ball $B^3$. At each virtual crossing of band, attach a $3$-dimensional $1$-handle to $B^3$. The transition is depicted in Figure \ref{fig_virt_seif_defn}, right, where we again use the standard Heegard diagram notation for the $1$-handle addition. The added $1$-handle allows one band to pass over the other. 
\newline

\begin{figure}[htb]
\def\svgwidth{5.3in}
     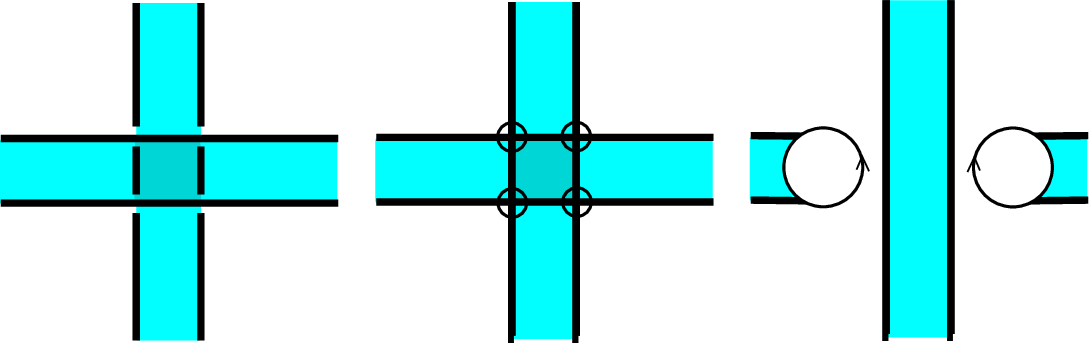
\caption{A classical crossing of bands (left)  and a virtual crossing of bands (center). Each virtual crossing of bands can be removed by adding a $1$-handle to obtain a disc-band surface on some surface $\Sigma$ (right).} 
\label{fig_virt_seif_defn}
\end{figure}

\begin{figure}[htb]
\begin{tabular}{ccc} & & \\ & & \\
 \def\svgwidth{2.00in}
\begingroup%
  \makeatletter%
  \providecommand\color[2][]{%
    \errmessage{(Inkscape) Color is used for the text in Inkscape, but the package 'color.sty' is not loaded}%
    \renewcommand\color[2][]{}%
  }%
  \providecommand\transparent[1]{%
    \errmessage{(Inkscape) Transparency is used (non-zero) for the text in Inkscape, but the package 'transparent.sty' is not loaded}%
    \renewcommand\transparent[1]{}%
  }%
  \providecommand\rotatebox[2]{#2}%
  \newcommand*\fsize{\dimexpr\f@size pt\relax}%
  \newcommand*\lineheight[1]{\fontsize{\fsize}{#1\fsize}\selectfont}%
  \ifx\svgwidth\undefined%
    \setlength{\unitlength}{236.51664303bp}%
    \ifx\svgscale\undefined%
      \relax%
    \else%
      \setlength{\unitlength}{\unitlength * \real{\svgscale}}%
    \fi%
  \else%
    \setlength{\unitlength}{\svgwidth}%
  \fi%
  \global\let\svgwidth\undefined%
  \global\let\svgscale\undefined%
  \makeatother%
  \begin{picture}(1,0.99743287)%
    \lineheight{1}%
    \setlength\tabcolsep{0pt}%
    \put(0,0){\includegraphics[width=\unitlength]{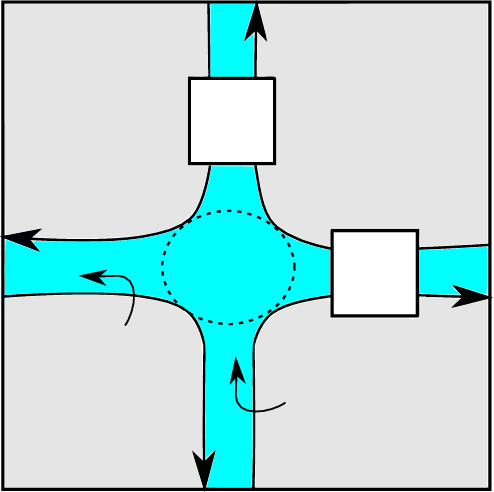}}%
    \put(0.70356767,0.42043317){\makebox(0,0)[lt]{\lineheight{1.25}\smash{\begin{tabular}[t]{l}$-n$\end{tabular}}}}%
    \put(0.441426,0.72688839){\makebox(0,0)[lt]{\lineheight{1.25}\smash{\begin{tabular}[t]{l}$m$\end{tabular}}}}%
    \put(0.40661632,0.44118612){\color[rgb]{0,0,0}\makebox(0,0)[lt]{\lineheight{40.54999924}\smash{\begin{tabular}[t]{l}$F^0$\end{tabular}}}}%
    \put(0.59714504,0.17599087){\color[rgb]{0,0,0}\makebox(0,0)[lt]{\lineheight{40.54999924}\smash{\begin{tabular}[t]{l}$F_1^1$\end{tabular}}}}%
    \put(0.16459341,0.27897924){\color[rgb]{0,0,0}\makebox(0,0)[lt]{\lineheight{40.54999924}\smash{\begin{tabular}[t]{l}$F_2^1$\end{tabular}}}}%
  \end{picture}%
\endgroup%
 & \quad \quad&
    \def\svgwidth{2.00in}
\begingroup%
  \makeatletter%
  \providecommand\color[2][]{%
    \errmessage{(Inkscape) Color is used for the text in Inkscape, but the package 'color.sty' is not loaded}%
    \renewcommand\color[2][]{}%
  }%
  \providecommand\transparent[1]{%
    \errmessage{(Inkscape) Transparency is used (non-zero) for the text in Inkscape, but the package 'transparent.sty' is not loaded}%
    \renewcommand\transparent[1]{}%
  }%
  \providecommand\rotatebox[2]{#2}%
  \newcommand*\fsize{\dimexpr\f@size pt\relax}%
  \newcommand*\lineheight[1]{\fontsize{\fsize}{#1\fsize}\selectfont}%
  \ifx\svgwidth\undefined%
    \setlength{\unitlength}{420.71801354bp}%
    \ifx\svgscale\undefined%
      \relax%
    \else%
      \setlength{\unitlength}{\unitlength * \real{\svgscale}}%
    \fi%
  \else%
    \setlength{\unitlength}{\svgwidth}%
  \fi%
  \global\let\svgwidth\undefined%
  \global\let\svgscale\undefined%
  \makeatother%
  \begin{picture}(1,1.02543137)%
    \lineheight{1}%
    \setlength\tabcolsep{0pt}%
    \put(0,0){\includegraphics[width=\unitlength]{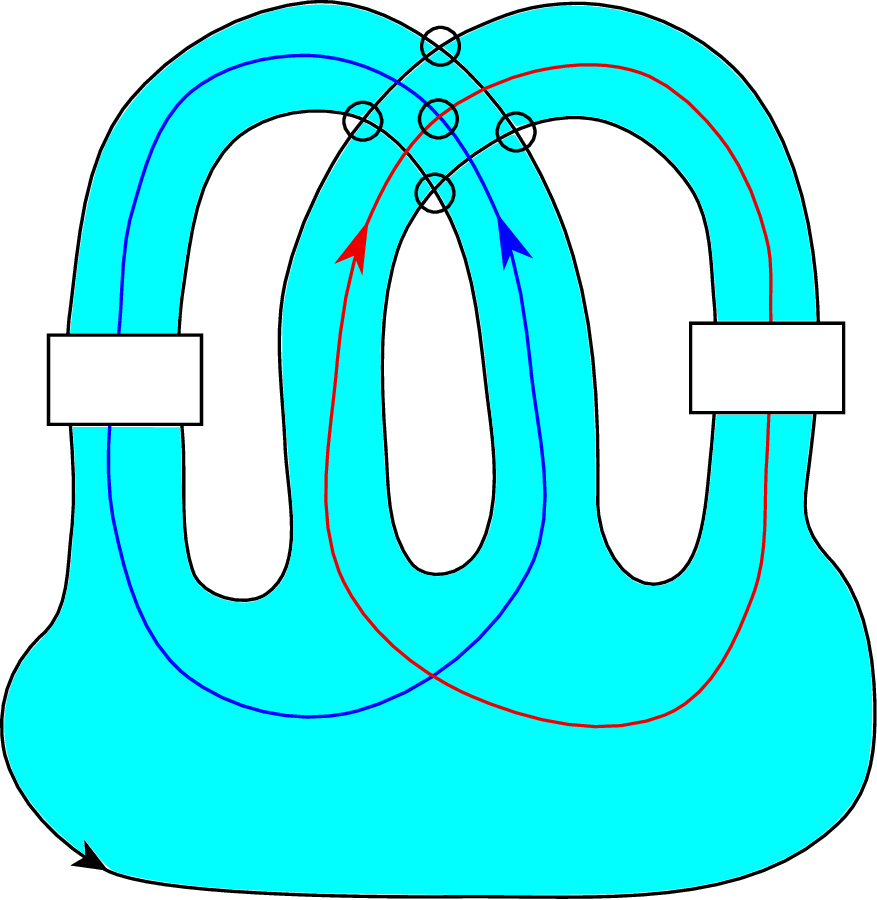}}%
    \put(0.22328315,0.15734331){\makebox(0,0)[lt]{\lineheight{1.25}\smash{\begin{tabular}[t]{l}$\alpha_1$\end{tabular}}}}%
    \put(0.63779181,0.13817061){\makebox(0,0)[lt]{\lineheight{1.25}\smash{\begin{tabular}[t]{l}$\alpha_2$\end{tabular}}}}%
    \put(0.10195729,0.57444542){\makebox(0,0)[lt]{\lineheight{1.25}\smash{\begin{tabular}[t]{l}$m$\end{tabular}}}}%
    \put(0.8228807,0.58506206){\makebox(0,0)[lt]{\lineheight{1.25}\smash{\begin{tabular}[t]{l}$-n$\end{tabular}}}}%
  \end{picture}%
\endgroup%

\end{tabular}
    \caption{The surface $F_0(m,n) \subset S^1 \times S^1 \times I$ (left) and its virtual disc-band surface (right).}
    \label{fig_kmn_disc_band}
\end{figure}

\begin{example} \label{example_F_0_third} The left side of Figure \ref{fig_kmn_disc_band} depicts a handle decomposition $F=F^0 \cup F_1^1 \cup F_2^1$ of a Seifert surface in $S^1 \times S^1 \times I$. The opposite sides of the large gray square are identified to make a torus $S^1 \times S^1$. The squares labeled $m$ and $-n$ indicate $m$ full positive twists of the $1$-handle $F_1^1$ and $n$ full negative twists of the $1$-handle $F_2^1$. The corresponding virtual disc-band surface is drawn in Figure \ref{fig_kmn_disc_band}. This figure is obtained by connecting the edges of the $1$-handles in $S^2$ according to the identifications of the torus. This forces the two bands to intersect transversely. Marking this transverse intersection as a virtual crossing of bands yields the depicted virtual disc-band surface. Relative to the ordered basis $\{\alpha_1,\alpha_2\}$, the directed Seifert matrices are exactly the matrices $A^+_0,A^-_0$ from Example \ref{example_mnmat_1}. Note that here we are using the non-standard ``left-hand rule'' for determining the direction of the push-offs (Boden et al. \cite{acpaper}). We have chosen to continue with this in order to be consistent with earlier papers (e.g. \cite{bcg2,vss}). \hfill $\square$ 
\end{example}

In \cite{vss}, it was shown that any Seifert surface in $\Sigma \times I$ can be drawn as a virtual disc-band surface on $S^2$. Moreover, given a Gauss diagram of an almost classical knot, the \emph{virtual Seifert surface algorithm} can be used to draw a figure on $S^2$ called a \emph{virtual Seifert surface} \cite{vss}. In this case, the classical crossings need not occur in band crossings but all virtual crossings must occur in virtual band crossings. The virtual knot bounded by a virtual Seifert surface is called its \emph{virtual boundary}. Using a virtual version of topological script, every virtual Seifert surface can be deformed to a virtual disc-band surface. The virtual Seifert surfaces used in the examples below (see Figures \ref{fig_5_2433} and \ref{fig_6_85091}) have been produced from the Gauss codes in Green's table \cite{green} using this method. Otherwise, the virtual Seifert surface algorithm is not used here. For further details, the reader is referred to \cite{vss}. In the next example, we show how virtual Seifert surfaces can be used to construct virtual concordances of Seifert surfaces.

\begin{example} \label{example_slice_movie_F_0} Here we will show that the Seifert surface $F$ in Figure \ref{fig_kmn_disc_band} is slice whenever $m=n$. Figure \ref{fig_k_1_1_movie} shows a virtual concordance to the disc for the case of $m=n=1$. The $t=0$ frame shows a virtual disc-band surface $F$. The frame $t=\tfrac{1}{4}$ shows the addition of a saddle. The dotted circle in this frame shows the boundary of compression disc. Compressing along this disc gives the virtual disc-band surface at $t=\tfrac{1}{2}$. After an isotopy, there is a single virtual band crossing. This ``virtual curl'' can be untwisted at the cost of adding a $2$-handle to the ambient space. To see this, temporarily convert back to the Heegard diagram notation. This is shown to the right of the $t=\tfrac{1}{2}$ frame. Attaching a $3$-dimensional $2$-handle along the magenta curve cancels the $1$-handle. For aid in visualization, the attaching sphere of the $2$-handle is also drawn as a virtual knot in the $t=\tfrac{1}{2}$ frame. Thus, we see that untwisting the virtual curl gives the annulus at $t=\tfrac{3}{4}$. Lastly, we contract the white disc in the $t=\tfrac{3}{4}$ frame to a point. This gives a disc at $t=1$. Thus $F$ is slice when $m=n=1$. The case of arbitrary $m=n$ follows similarly. \hfill $\square$
\end{example}

\begin{figure}[htb]
    \def\svgwidth{3.5in}\tiny
\begingroup%
  \makeatletter%
  \providecommand\color[2][]{%
    \errmessage{(Inkscape) Color is used for the text in Inkscape, but the package 'color.sty' is not loaded}%
    \renewcommand\color[2][]{}%
  }%
  \providecommand\transparent[1]{%
    \errmessage{(Inkscape) Transparency is used (non-zero) for the text in Inkscape, but the package 'transparent.sty' is not loaded}%
    \renewcommand\transparent[1]{}%
  }%
  \providecommand\rotatebox[2]{#2}%
  \newcommand*\fsize{\dimexpr\f@size pt\relax}%
  \newcommand*\lineheight[1]{\fontsize{\fsize}{#1\fsize}\selectfont}%
  \ifx\svgwidth\undefined%
    \setlength{\unitlength}{346.85063296bp}%
    \ifx\svgscale\undefined%
      \relax%
    \else%
      \setlength{\unitlength}{\unitlength * \real{\svgscale}}%
    \fi%
  \else%
    \setlength{\unitlength}{\svgwidth}%
  \fi%
  \global\let\svgwidth\undefined%
  \global\let\svgscale\undefined%
  \makeatother%
  \begin{picture}(1,1.57363842)%
    \lineheight{1}%
    \setlength\tabcolsep{0pt}%
    \put(0,0){\includegraphics[width=\unitlength]{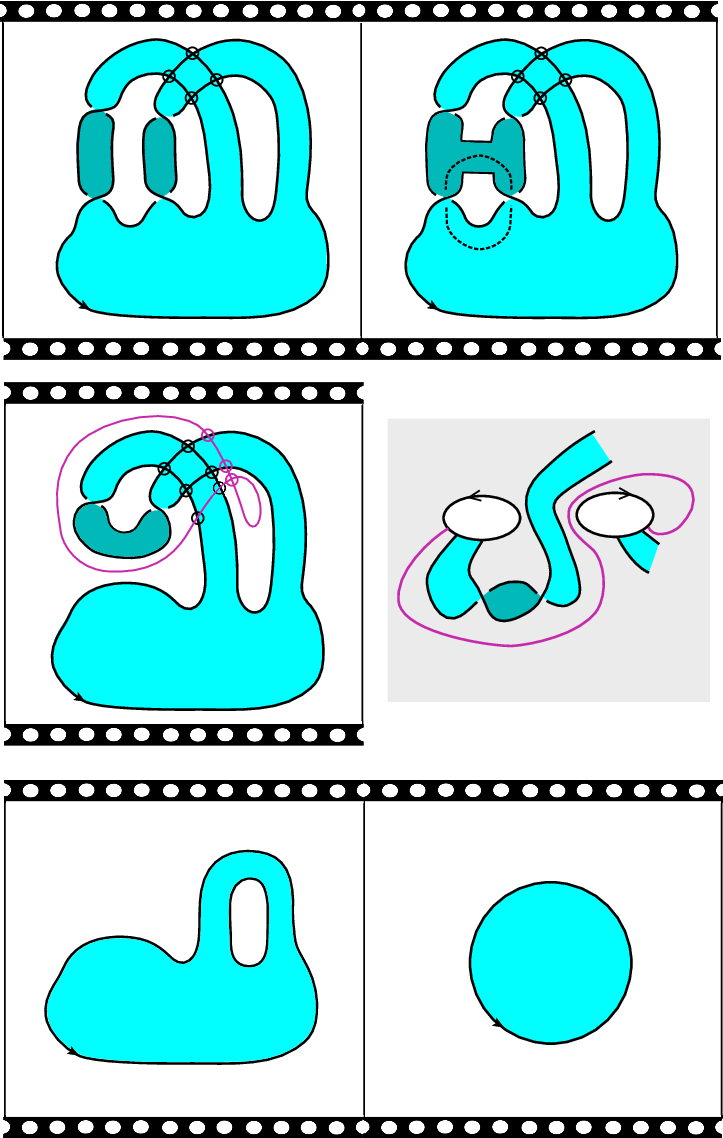}}%
    \put(0.41093062,1.12398569){\color[rgb]{0,0,0}\makebox(0,0)[lt]{\lineheight{40.54999924}\smash{\begin{tabular}[t]{l}$t=0$\end{tabular}}}}%
    \put(0.87857576,1.12398569){\color[rgb]{0,0,0}\makebox(0,0)[lt]{\lineheight{40.54999924}\smash{\begin{tabular}[t]{l}$t=\tfrac{1}{4}$\end{tabular}}}}%
    \put(0.4246359,0.59846228){\color[rgb]{0,0,0}\makebox(0,0)[lt]{\lineheight{40.54999924}\smash{\begin{tabular}[t]{l}$t=\tfrac{1}{2}$\end{tabular}}}}%
    \put(0.40061795,0.06045942){\color[rgb]{0,0,0}\makebox(0,0)[lt]{\lineheight{40.54999924}\smash{\begin{tabular}[t]{l}$t=\tfrac{3}{4}$\end{tabular}}}}%
    \put(0.87857576,0.0628612){\color[rgb]{0,0,0}\makebox(0,0)[lt]{\lineheight{40.54999924}\smash{\begin{tabular}[t]{l}$t=1$\end{tabular}}}}%
  \end{picture}%
\endgroup%
 \normalsize
    \caption{A virtual concordance movie using virtual Seifert surfaces. See Example \ref{example_slice_movie_F_0}.}
    \label{fig_k_1_1_movie}
\end{figure}

\subsection{Arf invariant example} \label{sec_arf_example} Let $K$ be the virtual knot $5.2433$ from Green's table \cite{green}. It is almost classical (see \cite{bcg2,acpaper}). The virtual disc-band surface $F$ drawn in Figure \ref{fig_5_2433} can be obtained using the virtual Seifert surface algorithm \cite{vss}. Relative to the given basis, the directed Seifert matrices are:
\[
A^+=\left[
\begin{array}{rrrr}
 1 & 0 & -1 & 0 \\
 0 & 1 & 0 & -1 \\
 0 & -1 & 1 & 0 \\
 0 & 0 & 0 & 1 \\
\end{array}
\right], \quad A^-=\left[
\begin{array}{rrrr}
 1 & 1 & -1 & -1 \\
 -1 & 1 & 0 & -1 \\
 0 & -1 & 1 & 1 \\
 1 & 0 & -1 & 1 \\
\end{array}
\right]
\]
Set $\textbf{A}=(A^+,A^-)$. The pairing $\star_F$ is non-singular, as can be seen by calculating $q_{\textbf{A}}(x+y)-q_{\textbf{A}}(x)-q_{\textbf{A}}(y)$ for every pair of elements $x,y$ in the given basis for $H_1(F;\mathbb{F}_2)$. The result is the non-singular matrix:
\[
\left[
\begin{array}{cccc}
 0 & 0 & 1 & 0 \\
 0 & 0 & 1 & 1 \\
 1 & 1 & 0 & 0 \\
 0 & 1 & 0 & 0 \\
\end{array}.
\right]
\]
Computing $q_{\textbf{A}}(x)$ for all $x \in H_1(F;\mathbb{F}_2)$, we see that $\text{Arf}(q_{\textbf{A}})=1$.

\begin{figure}[htb]
    \centering
    \def\svgwidth{3in}
\begingroup%
  \makeatletter%
  \providecommand\color[2][]{%
    \errmessage{(Inkscape) Color is used for the text in Inkscape, but the package 'color.sty' is not loaded}%
    \renewcommand\color[2][]{}%
  }%
  \providecommand\transparent[1]{%
    \errmessage{(Inkscape) Transparency is used (non-zero) for the text in Inkscape, but the package 'transparent.sty' is not loaded}%
    \renewcommand\transparent[1]{}%
  }%
  \providecommand\rotatebox[2]{#2}%
  \newcommand*\fsize{\dimexpr\f@size pt\relax}%
  \newcommand*\lineheight[1]{\fontsize{\fsize}{#1\fsize}\selectfont}%
  \ifx\svgwidth\undefined%
    \setlength{\unitlength}{681.91206075bp}%
    \ifx\svgscale\undefined%
      \relax%
    \else%
      \setlength{\unitlength}{\unitlength * \real{\svgscale}}%
    \fi%
  \else%
    \setlength{\unitlength}{\svgwidth}%
  \fi%
  \global\let\svgwidth\undefined%
  \global\let\svgscale\undefined%
  \makeatother%
  \begin{picture}(1,0.72073851)%
    \lineheight{1}%
    \setlength\tabcolsep{0pt}%
    \put(0,0){\includegraphics[width=\unitlength]{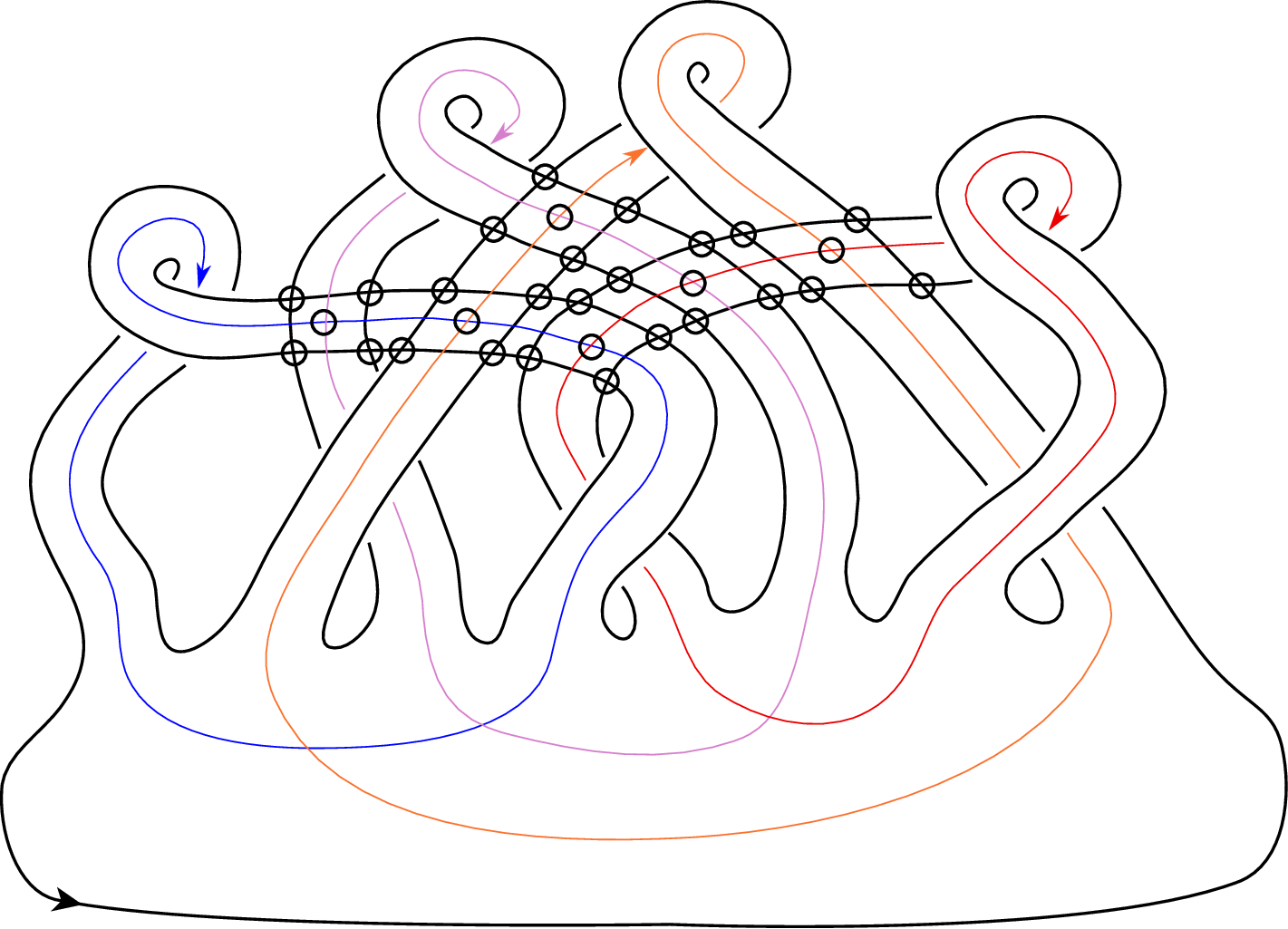}}%
    \put(0.10194479,0.13094637){\makebox(0,0)[lt]{\lineheight{1.25}\smash{\begin{tabular}[t]{l}$\alpha_1$\end{tabular}}}}%
    \put(0.25088107,0.07247254){\makebox(0,0)[lt]{\lineheight{1.25}\smash{\begin{tabular}[t]{l}$\alpha_2$\end{tabular}}}}%
    \put(0.68891429,0.15411529){\makebox(0,0)[lt]{\lineheight{1.25}\smash{\begin{tabular}[t]{l}$\alpha_3$\end{tabular}}}}%
    \put(0.47124939,0.15184978){\makebox(0,0)[lt]{\lineheight{1.25}\smash{\begin{tabular}[t]{l}$\alpha_4$\end{tabular}}}}%
  \end{picture}%
\endgroup%

    \caption{A virtual disc-band surface for the almost classical knot 5.2433.}
    \label{fig_5_2433}
\end{figure}

\subsection{Guide to calculating algebraic concordance order} \label{sec_calculating} By Theorem \ref{thm_classify_VG}, $\mathscr{VG}^{\mathbb{Q}} \cong \mathscr{I}(\mathbb{Q})\oplus \mathscr{G}^{\mathbb{Q}}$. Hence we can determine the order in $\mathscr{VG}^{\mathbb{Q}}$ by determining the order in each summand. For the reader's convenience, we recall some facts about calculating torsion in $\mathscr{W}(\mathbb{Q})$ and $\mathscr{G}^{\mathbb{Q}}$. The Witt group $\mathscr{W}(\mathbb{Q})$ has a canonical decomposition in terms of $\mathscr{W}(\mathbb{R})$ and $\mathscr{W}(\mathbb{F}_p)$ (Scharlau \cite{scharlau}, Theorem 5.3.4). The isomorphism is: 
\[
\mathscr{W}(\mathbb{Q}) \cong \mathscr{W}(\mathbb{R}) \oplus \coprod_{p} \mathscr{W}(\mathbb{Z}/p\mathbb{Z}),
\]
 where the coproduct is taken over all primes $p \in \mathbb{N}$. Real forms in $\mathscr{W}(\mathbb{R}) \cong \mathbb{Z}$ are classified by their signature. For $\mathbb{Z}/p\mathbb{Z}$, canonical maps $\partial_p :\mathscr{W}(\mathbb{Q}) \to \mathscr{W}(\mathbb{Z}/2\mathbb{Z})$ are defined as follows. First, diagonalize a given rational form $q$ over $\mathbb{Q}$ as $\langle a_1 \rangle \perp \cdots \perp \langle a_m \rangle$. For each $i$, write $a_i=a_i'p^{k_i}$, where $a_i$ is relatively prime to $p$. If $k_i$ is even, set $\partial_p \langle a_i \rangle$ to be the trivial form in $\mathscr{W}(\mathbb{Z}/p\mathbb{Z})$. If $k$ is odd, set $\partial_p \langle a_i \rangle=\langle [a_i'] \rangle$. Then $\partial_p(q)=\bigperp_{k_i \text{ odd}} \langle [a_i'] \rangle$. Note that $\partial_p(q)$ is trivial for all but finitely many $p$.
\newline
\newline
For $p=2$, $\mathscr{W}(\mathbb{Z}/p\mathbb{Z}) \cong \mathbb{Z}/2\mathbb{Z}$. The isomorphism is the rank modulo $2$ (i.e. the dimension index). For $p \equiv 1 \pmod{4}$, $\mathscr{W}(\mathbb{Z}/p\mathbb{Z}) \cong  \mathbb{Z}/2\mathbb{Z} \oplus \mathbb{Z}/2\mathbb{Z}$. The isomorphism is given by $(e,d):\mathscr{W}(\mathbb{Z}/p\mathbb{Z}) \to \mathbb{Z}/2\mathbb{Z} \oplus \mathbb{Z}/2\mathbb{Z}$ where $e$ is the dimension index and $d$ is the discriminant. For $p \equiv 3 \pmod{4}$, $\mathscr{W}(\mathbb{Z}/p\mathbb{Z}) \cong \mathbb{Z}/4\mathbb{Z}$. In this case, odd rank forms have have order $4$.
\newline
\newline
Here we will use four well-known facts to calculate concordance order in $\mathscr{G}^{\mathbb{Q}} \cong \mathscr{G}_{\mathbb{Q}}$. Proofs and further discussion can be found in the indicated references.

\begin{fact} [see Livingston \cite{liv_aco}, Theorem 2.2] \label{fact_livingston_sig} Let $A$ be an integral Seifert matrix of a classical knot $K$. Then $A \in \mathscr{G}^{\mathbb{Q}}$ has finite order if and only if it has vanishing signature function.
\end{fact}

\begin{fact}[see Livingston \cite{liv_aco}, Corollary 3.3] \label{fact_livingston_not_4} Suppose $A \in \mathscr{G}^{\mathbb{Q}}$ arises from the Seifert matrix of a classical knot. Suppose that the Alexander polynomial $\Delta_A(t)=\det(A-t A^{\intercal})$ has the property that $\Delta_A(-1)$ is not divisible by a prime $p$ satisfying $p \equiv 3 \pmod 4$. Then $A$ does not have order $4$.
\end{fact}

\begin{fact} [see Levine \cite{levine_2}, Corollary 23a,c] \label{fact_levine} Let $\beta=(V,B,S) \in \mathscr{G}_{\mathbb{Q}}$ be an isometric structure such that $\Delta_S(t)=\prod_i (\lambda_i(t))^{e_i}$, where each $\lambda_i(t)$  is an irreducible quadratic. If $\lambda_i(1)\lambda_i(-1)<0$ for all $i$, then $\beta$ has finite order. If $\beta$ has finite order, $\beta$ has order $4$ if and only if there is a prime $p \equiv 3 \pmod{4}$ and a symmetric factor $\lambda_i(t)$ such that $e_i$ is odd and $\lambda_i(1)\lambda_i(-1)=p^aq$, where $a$ is odd and $\gcd(p,q)=1$.
\end{fact}

\begin{fact}[see Livingston \cite{liv_aco}, Section 4.1] \label{fact_livingston_order_2} If $(V,B,S) \in \mathscr{G}_{\mathbb{Q}}$ is of finite order $\ne 4$ and $\Delta_S(t)$ has an irreducible symmetric factor $\lambda(t)$ having odd exponent in $\Delta_S(t)$, then $(V,B,S)$ has order $2$. 
\end{fact}

\begin{figure}[htb]
\def\svgwidth{3.25in}
\begingroup%
  \makeatletter%
  \providecommand\color[2][]{%
    \errmessage{(Inkscape) Color is used for the text in Inkscape, but the package 'color.sty' is not loaded}%
    \renewcommand\color[2][]{}%
  }%
  \providecommand\transparent[1]{%
    \errmessage{(Inkscape) Transparency is used (non-zero) for the text in Inkscape, but the package 'transparent.sty' is not loaded}%
    \renewcommand\transparent[1]{}%
  }%
  \providecommand\rotatebox[2]{#2}%
  \newcommand*\fsize{\dimexpr\f@size pt\relax}%
  \newcommand*\lineheight[1]{\fontsize{\fsize}{#1\fsize}\selectfont}%
  \ifx\svgwidth\undefined%
    \setlength{\unitlength}{477.64103766bp}%
    \ifx\svgscale\undefined%
      \relax%
    \else%
      \setlength{\unitlength}{\unitlength * \real{\svgscale}}%
    \fi%
  \else%
    \setlength{\unitlength}{\svgwidth}%
  \fi%
  \global\let\svgwidth\undefined%
  \global\let\svgscale\undefined%
  \makeatother%
  \begin{picture}(1,0.74807515)%
    \lineheight{1}%
    \setlength\tabcolsep{0pt}%
    \put(0,0){\includegraphics[width=\unitlength]{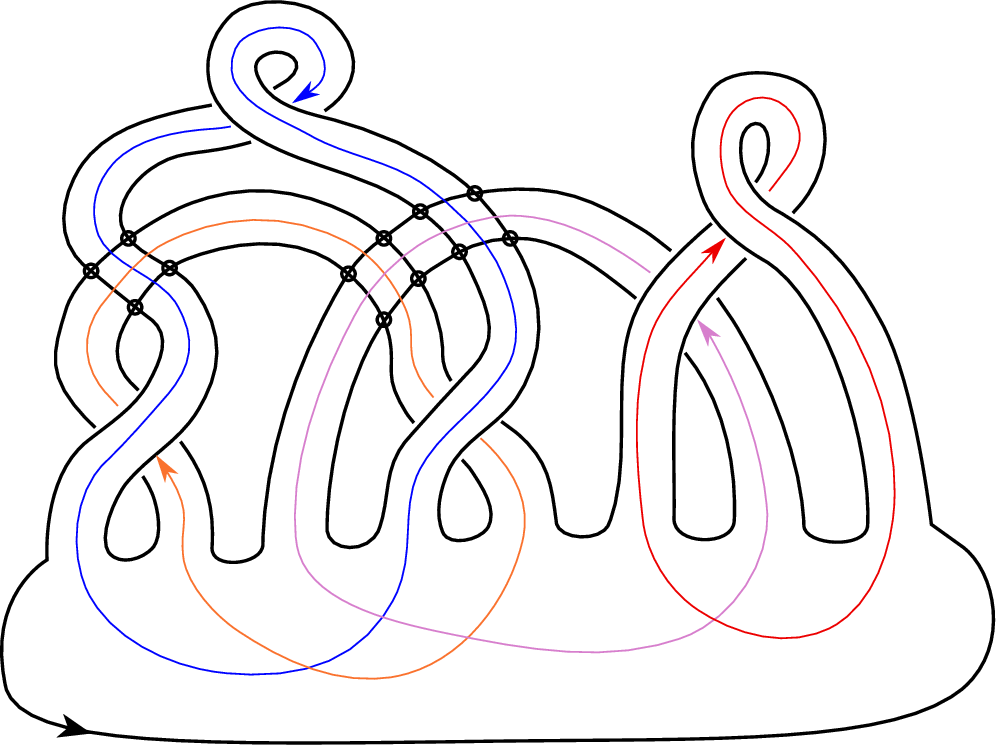}}%
    \put(0.14826193,0.05956761){\makebox(0,0)[lt]{\lineheight{1.25}\smash{\begin{tabular}[t]{l}$\alpha_1$\end{tabular}}}}%
    \put(0.36038065,0.04){\makebox(0,0)[lt]{\lineheight{1.25}\smash{\begin{tabular}[t]{l}$\alpha_2$\end{tabular}}}}%
    \put(0.57194717,0.07){\makebox(0,0)[lt]{\lineheight{1.25}\smash{\begin{tabular}[t]{l}$\alpha_3$\end{tabular}}}}%
    \put(0.85179228,0.11324123){\makebox(0,0)[lt]{\lineheight{1.25}\smash{\begin{tabular}[t]{l}$\alpha_4$\end{tabular}}}}%
  \end{picture}%
\endgroup%

\caption{A virtual disc-band surface for the almost classical knot 6.85091.} \label{fig_6_85091}
\end{figure}

\subsection{Non-classical torsion in algebraic concordance} \label{sec_nonclass_torsion} A virtual disc-band surface $F$ of the almost classical knot 6.85091 is shown in Figure \ref{fig_6_85091}. We will show that this Seifert surface is not concordant to a classical Seifert surface but has finite order directed Seifert matrices in $\mathscr{VG}^{\mathbb{Q}}$. Relative to the indicated basis of $H_1(F;\mathbb{Z})$, it follows that:
\[
A^+=
\begin{bmatrix*}[r]
 1 & -1 & -1 & 0 \\
 -1 & 0 & 1 & 0 \\
 1 & -1 & 0 & 1 \\
 0 & 0 & 0 & 1 \\
\end{bmatrix*}
, \quad\quad A^-=
\begin{bmatrix*}[r]
 1 & -2 & 0 & 0 \\
 0 & 0 & 0 & 0 \\
 0 & 0 & 0 & 0 \\
 0 & 0 & 1 & 1 \\
\end{bmatrix*}
\]
That $A^-$ is metabolic in $\mathscr{VG}^{\mathbb{Q}}$ is clear. To show that $A^+$ has order $2$, we use the isomorphism $\mathscr{VG}^{\mathbb{Q}} \cong \mathscr{VG}_{\mathbb{Q}}$. Set $V=H_1(F;\mathbb{Q})\cong \mathbb{Q}^4$. The directed isometric structure $\beta=(V,B,S)$ is given by:
\[
B=A^++(A^+)^{\intercal}=
\begin{bmatrix*}[r]
 2 & -2 & 0 & 0 \\
 -2 & 0 & 0 & 0 \\
 0 & 0 & 0 & 1 \\
 0 & 0 & 1 & 2 \\
\end{bmatrix*},  \quad\quad S=(A^+)^{-1}(A^+)^{\intercal}=
\begin{bmatrix*}[r]
 -1 & 2 & -1 & -1 \\
 0 & 1 & 0 & 0 \\
 -2 & 2 & -2 & -1 \\
 0 & 0 & 1 & 1 \\
\end{bmatrix*}
\]
The characteristic polynomial of $S$ is $ \Delta_S(t)=(t-1)^2 \left(t^2+3 t+1\right)$. The primary components are: 
\begin{align*}
V_{t-1} &= \text{span} \left(\{(1, 1, 0, 0)^{\intercal}, (-1, 0, 0, 2)^{\intercal}\} \right)\\
V_{t^2+3t+1} &= \text{span}\left(\{(1, 0, 1, 0)^{\intercal}, (2, 0, 0, 1)^{\intercal}\} \right)
\end{align*}
Let $B'$ and $S'$ denote the restrictions of $B$ and $S$ to $V_{t-1}$, respectively. Likewise, let $B'',S''$ denote the restrictions of $B,S$ to $V_{t^2+3t+1}$. Then $\beta'=(V_{t-1},B',S') \in \mathscr{I}(\mathbb{Q})$, $\beta''=(V_{t^2+3t+1},B'',S'') \in \mathscr{G}_{\mathbb{Q}}$ and we have decomposed $(V,B,S)$ as an element in $\mathscr{I}(\mathbb{Q}) \oplus \mathscr{G}_{\mathbb{Q}}$. The block decomposition is:
\[
B' \oplus B''= \left[
\begin{array}{rr|rr}
 -2 & 0 & 0 & 0 \\
 0 & 10 & 0 & 0 \\ \hline
 0 & 0 & 2 & 5 \\
 0 & 0 & 5 & 10 \\
\end{array}
\right], \quad\quad S' \oplus S''=\left[
\begin{array}{rr|rr}
 1 & 0 & 0 & 0 \\
 0 & 1 & 0 & 0 \\ \hline
 0 & 0 & -4 & -5 \\
 0 & 0 & 1 & 1 \\
\end{array}
\right]
\]
We now show that both $\beta'$ and $\beta''$ have order $2$, so that $\beta$ also has order $2$. Consider first the case of $\beta''$. Since $\Delta_{S''}(t)=t^2+3t+1$, $\Delta_{S''}(t)$ is a product of symmetric irreducible quadratics over $\mathbb{Q}$. Hence, the order may be computed using Fact \ref{fact_levine}. That $\beta''$  has finite order follows from $\Delta_{S''}(-1)\Delta_{S''}(1)=-5<0$. Since the only prime dividing $\Delta_{S''}(-1)\Delta_{S''}(1)$ is $5$, it follows that $\beta''$ cannot have order $4$. By Fact \ref{fact_livingston_order_2}, $\beta''$ has order $2$ in $\mathscr{G}_{\mathbb{Q}}$.
\newline
\newline
Now consider the case of $\beta'$. The corresponding quadratic form is $\langle -2\rangle \perp \langle 10 \rangle$. Since this has zero signature, $\langle -2\rangle \perp \langle 10 \rangle$ has finite order. To determine the order, we must compute $\partial_p(\langle -2\rangle \perp \langle 10 \rangle)$ for all $p=2,3,5,7,\ldots$. For primes $p \ne 5$, $\partial_p(\langle -2\rangle \perp \langle 10 \rangle)$ is the trivial class in $\mathscr{W}(\mathbb{F}_p)$.  For $p=5$, $\partial_5(\langle -2\rangle \perp \langle 10 \rangle)=\langle 2 \rangle$. Since $p \equiv 1 \pmod{4}$, $\mathscr{W}(\mathbb{F}_5) \cong \mathbb{Z}/2\mathbb{Z} \oplus \mathbb{Z}/2\mathbb{Z}$. This is an odd rank form and hence cannot be trivial. Thus $\beta'$ has order $2$ in $\mathscr{I}(\mathbb{Q})$ and hence $A^+$ has order $2$ in $\mathscr{VG}^{\mathbb{Q}}$.
\newline
\newline
Since $A^+$ and $A^-$ have different orders in $\mathscr{VG}^{\mathbb{Q}}$, it follows from Proposition \ref{prop_g_in_eqlzr}, that $(A^+,A^-)$ cannot be algebraically concordant to a Seifert couple of a classical knot in $S^3$. It is also known that $6.85091$ is not concordant to a classical knot, since it has graded genus $1$ \cite{bcg1}. Thus, 6.85091 is an almost classical knot that is not concordant to a classical knot, admits a Seifert couple $(A^+,A^-)$ that is not algebraically concordant to a classical Seifert couple, and $A^+$ is of order $2$ in $\mathscr{VG}^{\mathbb{Q}}$. According to the symmetry data in Green's table \cite{green}, 6.85091 is equivalent to both its mirror images and their inverses. This suggests that there is a choice of base point for 6.85091 such that the resulting long virtual knot is not concordant to any classical knot but has finite order in the long virtual knot concordance group. We state this as a conjecture.

\begin{conjecture} The almost classical knot 6.85091 is the closure of a long virtual knot $\upsilon$ that has order 2 in $\mathscr{VC}$ and is not concordant to a classical knot.
\end{conjecture}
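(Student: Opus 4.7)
The plan is to reduce the conjecture to two independent tasks: constructing a geometric concordance witnessing the order-$2$ claim, and promoting the algebraic obstruction already established in Section \ref{sec_examples_sample} from the Seifert-surface concordance group $\mathscr{VF}$ to the long virtual knot concordance group $\mathscr{VC}$. First, fix the minimal-genus representative $K \subset \Sigma \times I$ of $6.85091$ together with the Seifert surface $F$ of Figure \ref{fig_6_85091}. Choosing a basepoint $* \in K$ that lies on the boundary arc of $F$ between the $1$-handles attached to $\alpha_1$ and $\alpha_4$ yields a long virtual knot $\upsilon$ whose closure is $6.85091$ and whose Seifert surface data are exactly the matrices $A^\pm$ computed in Section \ref{sec_examples_sample}.

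For the non-classicality claim, I would argue contrapositively. Suppose $\upsilon$ is concordant in $\mathscr{VC}$ to the closure $\upsilon_0$ of a long classical knot. The Boden--Nagel identification \cite{boden_nagel} of $\mathscr{VC}$ with the concordance group of closures in thickened surfaces yields a concordance annulus $A \subset W \times I$ between $K$ and some classical knot $K_0 \subset S^3$. Choose a Seifert surface $F_0$ for $K_0$ in $S^3 = S^2 \times I$ (up to stabilization). The goal is to extend $F \cup A \cup -F_0$ to a two-sided $3$-manifold $M \subset W \times I$ producing a virtual concordance of Seifert surfaces in the sense of Definition \ref{defn_virt_conc_Seif}. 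Such an extension exists when the relative homology class $[F \cup A \cup -F_0] \in H_2(W \times I, A)$ vanishes, and this vanishing can always be arranged by correcting $F_0$ using the ambient $\mathbb{Z}$-action of Section \ref{sec_ambient}, which by Theorem \ref{thm_ambient_orbit} acts transitively on the concordance classes of Seifert surfaces of $K_0$. Combined with the calculation that $\pi^+(A^+,A^-) \not\sim \pi^-(A^+,A^-)$ in $\mathscr{VG}^{\mathbb{Q}}$, Proposition \ref{prop_g_in_eqlzr} then gives a contradiction, so $\upsilon$ is not concordant to a classical knot.

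For the order-$2$ claim, I would exploit the symmetry data in Green's table: $6.85091$ is equivalent to its inverse $\upsilon^{-1}$, its mirror, and hence its reverse-mirror. In $\mathscr{VC}$ the inverse of $\upsilon$ is represented by $-\upsilon^{-1}$ (reverse-mirror), so if the chosen basepoint can be arranged to respect this symmetry, then $\upsilon \# \upsilon \equiv \upsilon \# (-\upsilon)^{-1}$ in $\mathscr{VC}$, and this connected sum is geometrically slice by the standard ``zip-up'' concordance movie (generalized to the virtual setting using virtual Seifert surfaces as in Example \ref{example_slice_movie_F_0}). To confirm $\upsilon$ itself is not slice, one applies the Arf-invariant obstruction of Theorem \ref{thm_arf_obstructs} to $F$, or alternatively re-uses the $\mathscr{VG}^{\mathbb{Q}}$ obstruction above, both of which show $\upsilon$ is nontrivial. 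This would pin the order at exactly $2$.

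The main obstacle will be the second step: rigorously showing that the algebraic non-concordance of Seifert surfaces detected by $\mathscr{VG}^{\mathbb{Q}}$ obstructs concordance of long virtual knots in $\mathscr{VC}$. The missing piece is a precise correspondence between long virtual knot concordances and concordances of Seifert surfaces that accommodates the fact that the choice of Seifert surface for a classical knot on the other end is not canonical; the ambient $\mathbb{Z}$-action is essential here but does not obviously account for all $\check{S}$-equivalence moves a concordance might induce, and this gap is what prevents the conjecture from being a theorem.
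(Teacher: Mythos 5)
You have correctly recognized that the statement is stated as a \emph{conjecture} in the paper, not a theorem, and your closing paragraph identifies the genuine gap the authors themselves leave open. Your plan follows the paper's own motivating heuristic (the calculation $\pi^+(\textbf{A}) \not\sim \pi^-(\textbf{A})$, Proposition \ref{prop_g_in_eqlzr}, and the symmetry data from Green's table), but adds two attempted reductions, and it is worth flagging precisely where each one breaks down.

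The first reduction is your claim that a long virtual knot concordance from $\upsilon$ to a classical knot $K_0$ can be upgraded to a virtual concordance of Seifert surfaces by ``correcting $F_0$ using the ambient $\mathbb{Z}$-action.'' This does not work as stated. The ambient $\mathbb{Z}$-action on a Seifert surface of $K_0 \subset S^2 \times I$ adds parallel copies of $S^2$, which, since $H_1(S^2)=0$, does not change the relative homology class of $F_0$ inside $H_2(W\times I, A)$ in any useful way; nor does Theorem \ref{thm_ambient_orbit} help here, since it controls Seifert surfaces of a \emph{fixed} almost classical knot, whereas the obstruction you need to kill lives in the ambient $4$-manifold $W\times I$ determined by the unknown concordance. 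Proposition \ref{prop_g_in_eqlzr} genuinely only obstructs virtual concordance of the specific Seifert surface $F$; it does not obstruct knot concordance unless one proves that a knot concordance can be filled by a two-sided $3$-manifold $M$ compatible with some Seifert surface on each end. That implication is exactly what is missing, and you are right that this is why the statement remains a conjecture.

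The second reduction, the order-$2$ argument from symmetry, is also incomplete. That $6.85091$ is equivalent to its inverse and mirrors as a \emph{closed} virtual knot does not immediately give $\upsilon \sim (-\upsilon)^{-1}$ in $\mathscr{VC}$, because long virtual knots can have basepoint-dependent concordance classes (this is precisely why the conjecture is phrased as ``there is a choice of base point''). Without verifying that the chosen basepoint is compatible with the diffeomorphism realizing the symmetry, the ``zip-up'' movie argument is not justified. Also note that neither the Arf obstruction nor the $\mathscr{VG}^{\mathbb{Q}}$ obstruction is established for long virtual knot concordance in the paper; they are obstructions to virtual concordance of Seifert surfaces, so even ruling out order $1$ requires the same missing bridge as the first reduction. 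In short, your proposal accurately reconstructs the authors' motivation and correctly locates the gap, but the two auxiliary steps you propose to close it do not succeed, which is consistent with the statement being left as an open conjecture.
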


\begin{figure}[htb]
\def\svgwidth{4in}
\begingroup%
  \makeatletter%
  \providecommand\color[2][]{%
    \errmessage{(Inkscape) Color is used for the text in Inkscape, but the package 'color.sty' is not loaded}%
    \renewcommand\color[2][]{}%
  }%
  \providecommand\transparent[1]{%
    \errmessage{(Inkscape) Transparency is used (non-zero) for the text in Inkscape, but the package 'transparent.sty' is not loaded}%
    \renewcommand\transparent[1]{}%
  }%
  \providecommand\rotatebox[2]{#2}%
  \newcommand*\fsize{\dimexpr\f@size pt\relax}%
  \newcommand*\lineheight[1]{\fontsize{\fsize}{#1\fsize}\selectfont}%
  \ifx\svgwidth\undefined%
    \setlength{\unitlength}{345.61046119bp}%
    \ifx\svgscale\undefined%
      \relax%
    \else%
      \setlength{\unitlength}{\unitlength * \real{\svgscale}}%
    \fi%
  \else%
    \setlength{\unitlength}{\svgwidth}%
  \fi%
  \global\let\svgwidth\undefined%
  \global\let\svgscale\undefined%
  \makeatother%
  \begin{picture}(1,0.55882715)%
    \lineheight{1}%
    \setlength\tabcolsep{0pt}%
    \put(0,0){\includegraphics[width=\unitlength]{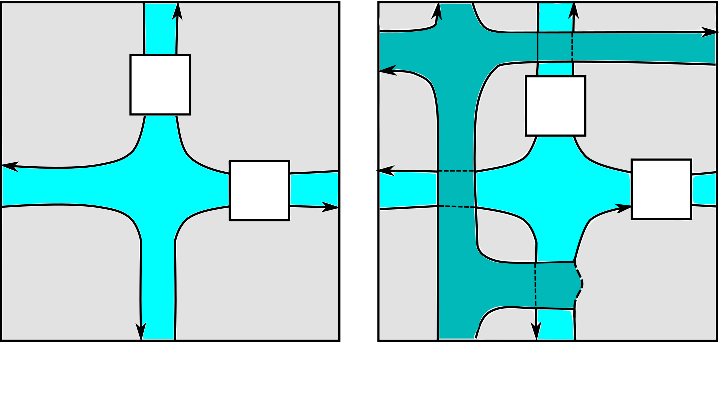}}%
    \put(0.88265846,0.28485811){\makebox(0,0)[lt]{\lineheight{1.25}\smash{\begin{tabular}[t]{l}$-n$\end{tabular}}}}%
    \put(0.13454444,0.01388029){\makebox(0,0)[lt]{\lineheight{1.25}\smash{\begin{tabular}[t]{l}$F_0(m,n)$\end{tabular}}}}%
    \put(0.6485156,0.00948213){\makebox(0,0)[lt]{\lineheight{1.25}\smash{\begin{tabular}[t]{l}$F_{-1}(m,n)$\end{tabular}}}}%
    \put(0.20179591,0.43448295){\makebox(0,0)[lt]{\lineheight{1.25}\smash{\begin{tabular}[t]{l}$m$\end{tabular}}}}%
    \put(0.32594014,0.28302513){\makebox(0,0)[lt]{\lineheight{1.25}\smash{\begin{tabular}[t]{l}$-n$\end{tabular}}}}%
    \put(0.75435679,0.4056527){\makebox(0,0)[lt]{\lineheight{1.25}\smash{\begin{tabular}[t]{l}$m$\end{tabular}}}}%
  \end{picture}%
\endgroup%

\caption{Two surfaces of a knot $K(m,n) \subset S^1 \times S^1 \times I$, where $m,n \in \mathbb{N}$ represent $m$ full positive twists and $n$ full negative twists, respectively.} \label{fig_kmn_two_surf}
\end{figure}

\subsection{Proof of Theorem \ref{thm_C}} \label{sec_thm_C_proof} Finally, we prove that there are infinitely many almost classical knots having Seifert matrices of all possible finite torsion in algebraic concordance. For this we will use the family of knots $K(m,n)$ shown in Figure \ref{fig_kmn_two_surf}. The squares labeled $m$ and $-n$ again correspond to $m$ positive full twists and $n$ negative full twists (see Figure \ref{fig_kmn_disc_band}).  A Seifert surface $F_0=F_0(m,n)$ is shown in Figure \ref{fig_kmn_two_surf}, left. The Seifert matrices of $F_0(m,n)$ were previously calculated in Example \ref{example_F_0_third}. This Seifert couple will be denoted by $\textbf{A}_0$. The right of Figure \ref{fig_kmn_two_surf} shows a Seifert surface $F_{-1}=F_{-1}(m,n)$. Note that $F_{-1}$ is $S$-equivalent to any Seifert surface of the form $F_0(m,n)-\Sigma$ (see Section \ref{sec_ambient_Z}). The basis $\{\alpha_1,\alpha_2\}$ for $H_1(F_0;\mathbb{Z})$ can be extended to a basis for $H_1(F_{-1};\mathbb{Z})$ by adding generators $\alpha_3,\alpha_4$ on $F_{-1}$ that are parallel to $\alpha_1, \alpha_2$, respectively. Let $\textbf{A}_{-1}$ denote a $\mathbb{Z}$-Seifert couple for $F_{-1}(m,n)$ relative to $\{\alpha_1,\alpha_2,\alpha_3,\alpha_4\}$. Then $\textbf{A}_0$ and $\textbf{A}_{-1}$ are given by: 
\[
A_0^+=\begin{bmatrix}
m & 0 \\ 0 & -n
\end{bmatrix},
A_0^-=\begin{bmatrix}
m & 1 \\ -1 & -n
\end{bmatrix},A_{-1}^{+}=\left[\begin{array}{c|c} A_0^+ & 0 \\ \hline H_2 & H_2 \end{array} \right],A_{-1}^{-}=\left[\begin{array}{c|c} A_0^- & 0 \\ \hline H_2 & 0 \end{array} \right], \text{ where } H_2=\begin{bmatrix}
0 & 1 \\ -1 & 0
\end{bmatrix}.
\]
Observe that $A_{-1}^-$ is metabolic and hence corresponds to an element of order $1$ in $\mathscr{VG}^{\mathbb{Q}}$. Theorem \ref{thm_C} will then follow from the fact that there are infinitely many pairs $m,n$ such that $A_0^+$ has order $4$ and $A_{-1}^+$ has order $2$. The directed isometric structures for $A_0^+,A_{-1}^+$ will be denoted by $(\mathbb{Q}^2,B_0,S_0^+)$, $(\mathbb{Q}^4,B_{-1},S_{-1}^+)$, respectively.  The characteristic polynomials for $S_0^+$ and $S_{-1}^+$ are given below: 
\[
\Delta_{S_0^+}(t) = (t-1)^2, \quad   \Delta_{S_{-1}^+}(t) = t^4-\left(2+\frac{1}{m n}\right)t^2+1.
\]

\begin{lemma} \label{lemma_A_0} For distinct odd primes $m,n$, if $m \text{ or } n \equiv 3 \pmod{4}$, then $A_0^+$ has order $4$.
\end{lemma}

\begin{proof} Since $\mathscr{VG}^{\mathbb{Q}} \cong \mathscr{VG}_{\mathbb{Q}} \cong \mathscr{I}(\mathbb{Q}) \oplus \mathscr{G}_{\mathbb{Q}}$, it is sufficient to show that each component in the decomposition  $\mathscr{I}(\mathbb{Q}) \oplus \mathscr{G}_{\mathbb{Q}}$ has finite order.  Since $\Delta_{S_0^+}(t) = (t-1)^2$, the restriction of $B_0$ to the $(t-1)$-primary component of $(\mathbb{Q}^2,B_0,S_0^{+})$ is $B_0$. Thus, $(\mathbb{Q}^2,B_0,S_0^{+})$ lies in $\mathscr{I}(\mathbb{Q})$. Since $B_0=A_0^++(A_0^+)^{\intercal}$ has signature $0$, this isometric structure has finite order in $\mathscr{VG}_{\mathbb{Q}}$.
\newline
\newline
The order of $A_0^+$ may therefore be determined using the classifying invariants of $\mathscr{W}(\mathbb{Q})$ (see Section \ref{sec_calculating}). Note that $\partial_m\langle 2m,-2n \rangle=\langle 2 \rangle$ is an odd rank form in $\mathscr{W}(\mathbb{F}_m)$. Similarly, $\partial_n\langle 2m,-2n \rangle=\langle -2 \rangle$ is an odd rank form in $\mathscr{W}(\mathbb{F}_n)$. If one of $m$ or $n$ is $3 \pmod{4}$, then at least one of $\mathscr{W}(\mathbb{F}_m)$ and $\mathscr{W}(\mathbb{F}_n)$ has an element of order $4$. As these correspond to the forms of odd rank, either $\partial_m\langle 2m,-2n \rangle$ or $\partial_m\langle 2m,-2n \rangle$ has order $4$. \end{proof}

\begin{lemma}\label{lemma_A_minus} If $mn$ and $4mn+1$ are not rational squares, then $A_{-1}^+$ has finite order $2$.
\end{lemma}
\begin{proof} We first show that $A^+_{-1}$ is of finite order. First note that $1$ is not a root of $\Delta_{S^+_{-1}}(t)$. This implies that the directed isometric structure lies in the $\mathscr{G}_{\mathbb{Q}}$ summand of $\mathscr{VG}_{\mathbb{Q}}$. Furthermore, we have that $\Delta_{K,F_{-1}}^+(t)=-m n t^4+(2 m n +1)t^2-m n$. Then $\Delta_{K,F_{-1}}^+(1)=1$. This implies that $A_{-1}^+$ is the Seifert matrix of a classical knot (see Burde-Zieschang \cite{bz}, Proposition 8.7). By Fact \ref{fact_livingston_sig}, it suffices to show that $A^+_{-1}$ has vanishing signature function $\widehat{\sigma}_{\omega}$. Observe that $\Delta_{S^+_{-1}}(t)$ has only real roots. Set $a^2=mn$. Then factor to obtain:
\[
\Delta_{S^+_{-1}}(t)=\frac{1}{a^2}(at^2-t-a)(at^2+t-a).
\]
From this it follows that the roots of $\Delta_{S^+_{-1}}(t)$ are $(1\pm \sqrt{1+4mn})/(2\sqrt{mn})$, $(-1\pm \sqrt{1+4mn})/(2\sqrt{mn})$. This implies that the roots of $\Delta_{S^+_{-1}}(t)$ are real and not equal to $\pm 1$. Then the directed signature function $\widehat{\sigma}_{\omega}$ for $A_{-1}^+$ must be constant. In particular, the constant value must be the signature of $B_{-1}=A_{-1}^++(A_{-1}^+)^{\intercal}$. The characteristic polynomial of $B_{-1}$ is $(t^2-2m t-1)(t^2+2nt-1)$. It has roots $-n \pm \sqrt{n^2+1},m \pm \sqrt{m^2+1}$. Hence, $B_{-1}$ has vanishing signature and $A_{-1}^+$ has vanishing signature function. Thus, $A_{-1}^+$ has finite order in $\mathscr{VG}^{\mathbb{Q}}$.
\newline
\newline
Now, $A_0^+$ cannot have order $4$. This follows from Fact \ref{fact_livingston_not_4}, since $\Delta_{K,F}^+(-1)=1$ is not divisible by a prime $p \equiv 3 \mod 4$. By Fact \ref{fact_livingston_order_2}, $A^+_{-1}$ will have order $2$ if there is a symmetric irreducible factor of $\Delta_{S_{-1}^+}(t)$ that has odd exponent in the factorization. We will complete the proof by showing that $\Delta_{S^+_{-1}}(t)$ is itself irreducible over $\mathbb{Q}$ under the stated conditions on $m$ and $n$.  
\newline
\newline
The calculation of the roots of $\Delta_{S^+_{-1}}(t)$ implies that $\Delta_{S^+_{-1}}(t)$ splits in $\mathbb{Q}(\sqrt{mn},\sqrt{4mn+1})$. Note that $\mathbb{Q}(\sqrt{mn}) \cap \mathbb{Q}(\sqrt{4mn+1})=\mathbb{Q}$. Indeed, if $\sqrt{mn}=a+b\sqrt{4mn+1}$ for some $a,b \in \mathbb{Q}$, then $\sqrt{mn}-b \sqrt{4mn+1} \in \mathbb{Q}$. Squaring both sides yields $\sqrt{mn(4mn+1)} \in \mathbb{Q}$. But this cannot be the case since $mn,4mn+1$ are not themselves square and $\gcd(mn,4mn+1)=1$. It follows that $[\mathbb{Q}(\sqrt{mn},\sqrt{4mn+1}):\mathbb{Q}]=4$. Hence, $\Delta_{S^+_{-1}}(t)$ is irreducible over $\mathbb{Q}$ and the proof is complete. \end{proof}

\begin{lemma} \label{lemma_12n} If $n \in \mathbb{N}$ is odd, then $12n+1$ is not a rational square.
\end{lemma}
\begin{proof} If $12n+1=x^2$ for some $x \in \mathbb{N}$, then $12n=(x-1)(x+1)$. Since $x$ is odd, $x-1$ and $x+1$ are even. Since $n$ is odd, $4$ cannot divide either of $x-1$ or $x+1$. This implies that $3n=\left(\tfrac{x-1}{2}\right)\left(\tfrac{x+1}{2}\right)$. Thus, $6$ divides one of $x \pm 1$. Suppose $6a=x-1$. Since $4$ does not divide $x-1$, $a$ is odd. Substituting gives $3n=3a(3a+1)$. But this implies that $n$ is even, since $3a+1$ is even. This is a contradiction. Similarly, $6$ cannot divide $x+1$. Thus, $12n+1$ is not a rational square. 
\end{proof}

\begin{thm} There exist infinitely many knots $K$ in $S^1 \times S^1 \times I$ such that for each $o\in \{1,2,4\}$, $K$ bounds a Seifert surface $F \subset S^1 \times S^1 \times I$ having algebraic concordance order $o$ in $\mathscr{VG}^{\mathbb{Q}}$.
\end{thm}
\begin{proof} Let $m=3$ and choose $n$ to be any odd prime greater than $3$. Then $mn$ is not a rational square. By Lemma \ref{lemma_12n}, $4(3)(n)+1$ is not a rational square. By Lemma \ref{lemma_A_0}, $A_0^+$ has order $4$. By Lemma \ref{lemma_A_minus}, $A_{-1}^+$ has order $2$. Since $A_{-1}^-$ has order $1$, the result follows. \end{proof}

\subsection{Directions for further research} \label{sec_further} Cochran, Orr, and Teichner \cite{COT} showed that there is an infinite filtration of the topological knot concordance group: $\mathscr{C}^{\text{top}} \supset \mathscr{F}_0 \supset \mathscr{F}_{.5} \supset \mathscr{F}_{1} \supset \mathscr{F}_{1.5} \supset \mathscr{F}_2 \supset \cdots$. The group $\mathscr{F}_n$ corresponds to those knots that are $n$-solvable. A knot having vanishing Arf invariant is $0$-solvable. The $.5$-solvable knots are those which are algebraically slice and the $1.5$-solvable knots have trivial Casson-Gordon invariants. Here we have made the first two steps towards extending the COT-filtration to the virtual setting: the Arf invariant and the algebraic concordance group. In future work, we hope to complete the extension to the entire COT-filtration.
\newline
\newline
In \cite{turaev_cobordism}, Turaev used graded matrices to define concordance invariants of knots on surfaces. How are these related to to the virtual algebraic concordance group? While algebraic concordance studies the symmetrized Seifert matrices of a Seifert surface, graded matrices are skew-symmetric matrices over $\mathbb{Z}$. As mentioned in Remark \ref{remark_alex_roots}, skew-symmetric bilinear forms are also needed to classify algebraic concordance of Seifert matrices in general. One would thus expect the Witt groups of Hermitian forms and the theory of $L$-groups to play a fundamental role in a concordance classification of both graded matrices and the coupled algebraic concordance group. We plan to address these technically challenging questions in a subsequent paper.

\subsection*{Acknowledgments} The first author was partially supported by research funds from The Ohio State University, Marion Campus. The second author was supported by the American Mathematical Society and the Simons Foundation through the AMS-Simons Travel Grant. The authors would like to thank N. Petit for helpful conversations about the Arf invariant. For advice and encouragement, we are grateful to S. Carter, D. Freund, P. Pongtanapaisan, and R. Todd.


\bibliographystyle{plain}
\bibliography{acvss_bib}

\begin{thebibliography}{10}

\bibitem{bcg1}
H.~U. Boden, M.~Chrisman, and R.~Gaudreau.
\newblock Virtual knot cobordism and bounding the slice genus.
\newblock {\em Exp. Math.}, 28(4):475--491, 2019.

\bibitem{bcg2}
H.~U. Boden, M.~Chrisman, and R.~Gaudreau.
\newblock Signature and concordance of virtual knots.
\newblock {\em Indiana Univ. Math. J.}, 69(7):2395--2459, 2020.

\bibitem{acpaper}
H.~U. Boden, R.~Gaudreau, E.~Harper, A.~J. Nicas, and L.~White.
\newblock Virtual knot groups and almost classical knots.
\newblock {\em Fund. Math.}, 238(2):101--142, 2017.

\bibitem{boden_nagel}
H.~U. Boden and M.~Nagel.
\newblock Concordance group of virtual knots.
\newblock {\em Proc. Amer. Math. Soc.}, 145(12):5451--5461, 2017.

\bibitem{bz}
G.~Burde and H.~Zieschang.
\newblock {\em Knots}, volume~5 of {\em de Gruyter Studies in Mathematics}.
\newblock Walter de Gruyter \& Co., Berlin, second edition, 2003.

\bibitem{CKS}
J.~S. Carter, S.~Kamada, and M.~Saito.
\newblock Stable equivalence of knots on surfaces and virtual knot cobordisms.
\newblock {\em J. Knot Theory Ramifications}, 11(3):311--322, 2002.
\newblock Knots 2000 Korea, Vol. 1 (Yongpyong).

\bibitem{band_pass}
M.~Chrisman.
\newblock Band-passes and long virtual knot concordance.
\newblock {\em J. Knot Theory Ramifications}, 26(10):1750057, 11, 2017.

\bibitem{vss}
M.~Chrisman.
\newblock Virtual {S}eifert surfaces.
\newblock {\em J. Knot Theory Ramifications}, 28(6):1950039, 33, 2019.

\bibitem{c_ext}
M.~Chrisman.
\newblock Milnor's concordance invariants for knots on surfaces.
\newblock {\em to appear, Algebraic and Geometric Topology}, 2020.
\newblock \href{https://arxiv.org/abs/2002.01505}{ArXiv/2002.01505}.

\bibitem{chp}
M.~Chrisman.
\newblock Concordances to prime hyperbolic virtual knots.
\newblock {\em Geom. Dedicata}, 212:379--414, 2021.

\bibitem{ct}
D.~Cimasoni and V.~Turaev.
\newblock A generalization of several classical invariants of links.
\newblock {\em Osaka J. Math.}, 44(3):531--561, 2007.

\bibitem{COT}
T.~D. Cochran, K.~E. Orr, and P.~Teichner.
\newblock Knot concordance, {W}hitney towers and {$L^2$}-signatures.
\newblock {\em Ann. of Math. (2)}, 157(2):433--519, 2003.

\bibitem{collins}
J.~Collins.
\newblock On the concordance orders of knots.
\newblock 2012.
\newblock \href{https://arxiv.org/abs/1206.0669}{ArXiv/1206.0669}.

\bibitem{dold}
A.~Dold.
\newblock {\em Lectures on algebraic topology}.
\newblock Classics in Mathematics. Springer-Verlag, Berlin, 1995.
\newblock Reprint of the 1972 edition.

\bibitem{frs}
R.~Fenn, C.~Rourke, and B.~Sanderson.
\newblock The rack space.
\newblock {\em Trans. Amer. Math. Soc.}, 359(2):701--740, 2007.

\bibitem{freedman_quinn}
M.~H. Freedman and F.~Quinn.
\newblock {\em Topology of 4-manifolds}, volume~39 of {\em Princeton
  Mathematical Series}.
\newblock Princeton University Press, Princeton, NJ, 1990.

\bibitem{gs}
R.~E. Gompf and A.~I. Stipsicz.
\newblock {\em {$4$}-manifolds and {K}irby calculus}, volume~20 of {\em
  Graduate Studies in Mathematics}.
\newblock American Mathematical Society, Providence, RI, 1999.

\bibitem{green}
J.~Green.
\newblock A table of virtual knots.
\newblock {\em \url{http://www.math.toronto.edu/drorbn/Students/GreenJ}}, 2004.

\bibitem{jabuka_naik}
Stanislav Jabuka and Swatee Naik.
\newblock Order in the concordance group and {H}eegaard {F}loer homology.
\newblock {\em Geom. Topol.}, 11:979--994, 2007.

\bibitem{UK}
U.~Kaiser.
\newblock {\em Link theory in manifolds}, volume 1669 of {\em Lecture Notes in
  Mathematics}.
\newblock Springer-Verlag, Berlin, 1997.

\bibitem{on_knots}
L.~H. Kauffman.
\newblock {\em On knots}, volume 115 of {\em Annals of Mathematics Studies}.
\newblock Princeton University Press, Princeton, NJ, 1987.

\bibitem{lou_cob}
L.~H. Kauffman.
\newblock Virtual knot cobordism.
\newblock In {\em New ideas in low dimensional topology}, volume~56 of {\em
  Ser. Knots Everything}, pages 335--377. World Sci. Publ., Hackensack, NJ,
  2015.

\bibitem{levine_2}
J.~Levine.
\newblock Invariants of knot cobordism.
\newblock {\em Invent. Math.}, 8:98--110; addendum, ibid. 8 (1969), 355, 1969.

\bibitem{levine}
J.~Levine.
\newblock Knot cobordism groups in codimension two.
\newblock {\em Comment. Math. Helv.}, 44:229--244, 1969.

\bibitem{lick_book}
W.~B.~R. Lickorish.
\newblock {\em An introduction to knot theory}, volume 175 of {\em Graduate
  Texts in Mathematics}.
\newblock Springer-Verlag, New York, 1997.

\bibitem{liv_aco}
C.~Livingston.
\newblock The algebraic concordance order of a knot.
\newblock {\em J. Knot Theory Ramifications}, 19(12):1693--1711, 2010.

\bibitem{livingston_naik}
C.~Livingston and S.~Naik.
\newblock {\em Introduction to Knot Concordance}.
\newblock 2016.
\newblock pdf notes.

\bibitem{MM}
R.~Mandelbaum and B.~Moishezon.
\newblock Numerical invariants of links in {$3$}-manifolds.
\newblock In {\em Low-dimensional topology ({S}an {F}rancisco, {C}alif.,
  1981)}, volume~20 of {\em Contemp. Math.}, pages 285--304. Amer. Math. Soc.,
  Providence, RI, 1983.

\bibitem{milnor_isom}
J.~Milnor.
\newblock On isometries of inner product spaces.
\newblock {\em Invent. Math.}, 8:83--97, 1969.

\bibitem{myers_new}
R.~Myers.
\newblock Concordance of {S}eifert surfaces.
\newblock {\em Pacific J. Math.}, 298(2):429--444, 2019.

\bibitem{scharlau}
W.~Scharlau.
\newblock {\em Quadratic and {H}ermitian forms}, volume 270 of {\em Grundlehren
  der Mathematischen Wissenschaften [Fundamental Principles of Mathematical
  Sciences]}.
\newblock Springer-Verlag, Berlin, 1985.

\bibitem{silwill0}
D.~S. Silver and S.~G. Williams.
\newblock Alexander groups and virtual links.
\newblock {\em J. Knot Theory Ramifications}, 10(1):151--160, 2001.

\bibitem{silwill1}
D.~S. Silver and S.~G. Williams.
\newblock Polynomial invariants of virtual links.
\newblock {\em J. Knot Theory Ramifications}, 12(7):987--1000, 2003.

\bibitem{silwill}
D.~S. Silver and S.~G. Williams.
\newblock Crowell's derived group and twisted polynomials.
\newblock {\em J. Knot Theory Ramifications}, 15(8):1079--1094, 2006.

\bibitem{turaev_cobordism}
V.~Turaev.
\newblock Cobordism of knots on surfaces.
\newblock {\em J. Topol.}, 1(2):285--305, 2008.

\end{thebibliography}

\end{document}